\documentclass{aptpub}
\usepackage{amsmath,amsfonts,amssymb, comment}
\usepackage{mathrsfs}
\usepackage{mathscinet}
\usepackage{graphicx}
\usepackage{enumerate}
\usepackage{algorithm}
\usepackage[noend]{algpseudocode}
\usepackage{color}
\usepackage[numbers]{natbib}
\usepackage[hyperindex,breaklinks]{hyperref}
\usepackage{xspace}
\usepackage{multirow}
\usepackage[noend]{algpseudocode}
\setcounter{MaxMatrixCols}{10}

\def\BState{\State\hskip-\ALG@thistlm}
\hypersetup{
	colorlinks   = true,   urlcolor     = blue,   linkcolor    = blue,   citecolor    = blue }
\providecommand{\U}[1]{\protect\rule{.1in}{.1in}}
\hypersetup{
	pdftitle={},
	pdfsubject={Probability Theory},
	pdfauthor={Jose Blanchet, Fan Zhang},
	pdfdisplaydoctitle=true,
}
\authornames{J. BLANCHET AND F. ZHANG} 
\shorttitle{Exact Simulation} 
\begin{document}
	
\title{Exact Simulation for Multivariate It\^o Diffusions}
\authorone[Stanford University]{Jose Blanchet}
\authortwo[Stanford University]{Fan Zhang}
\addressone{Huang Engineering Center, 475 Via Ortega, Stanford, CA 94305, United States. }
\emailone{jose.blanchet@stanford.edu}
\addresstwo{Huang Engineering Center, 475 Via Ortega, Stanford, CA 94305, United States. }
\emailtwo{fzh@stanford.edu}
\begin{abstract}
We provide the first generic exact simulation algorithm for multivariate
diffusions. Current exact sampling algorithms for diffusions require the
existence of a transformation which can be used to reduce the sampling
problem to the case of a constant diffusion matrix and a drift which is the
gradient of some function. Such transformation, called Lamperti
transformation, can be applied in general only in one dimension. So,
completely different ideas are required for the exact sampling of generic
multivariate diffusions. The development of these ideas is the main
contribution of this paper. Our strategy combines techniques borrowed from
the theory of rough paths, on the one hand, and multilevel Monte Carlo on the
other.
\end{abstract}

\keywords{Exact Simulation, Stochastic Differential Equation, Brownian Motion, Monte Carlo Method.}
\ams{34K50,65C05,82B80}{97K60}

\section{Introduction}

Consider a probability space $(\Omega,\mathcal{F},\mathbb{P})$ and an It\^{o}
Stochastic Differential Equation (SDE)
\begin{equation}  \label{eq:SDE}
dX(t)=\mu (X(t))dt+\sigma (X(t))dW(t),\text{ }X(0)=x_0,
\end{equation}
where $W(\cdot )$ is a $d^{\prime }$-dimensional Brownian motion under $%
\mathbb{P}$, and $\mu (\cdot) = (\mu_i(\cdot))_d:\mathbb{R}^{d}\rightarrow
\mathbb{R}^{d}$ and $\sigma (\cdot) = (\sigma_{ij}(\cdot))_{d\times
d^{\prime }}:\mathbb{R}^{d}\rightarrow \mathbb{R}^{d\times d^{\prime }}$
satisfy suitable regularity conditions. For instance, in order for %
\eqref{eq:SDE} to have a strong solution, it is sufficient to assume that
both $\mu \left( \cdot \right) $ and $\sigma \left( \cdot \right) $ are
uniformly Lipschitz.

Under additional regularity conditions on $\mu \left( \cdot \right) $ and $%
\sigma \left( \cdot \right) $, this paper provides the first Monte Carlo
simulation algorithm which allows sampling any discrete skeleton $X\left(
t_{1}\right) ,...,X\left( t_{m}\right) $ exactly, without any bias.

The precise regularity conditions that we impose on $\mu \left( \cdot
\right) $ and $\sigma \left( \cdot \right) $ are stated in Section \ref{sec:general}. In particular, it is sufficient for the validity of our Monte
Carlo method to assume $\mu \left( \cdot \right) $ and $\sigma \left( \cdot
\right) $ to be three times continuously differentiable, both with Lipschitz
continuous derivatives of order three. In addition, we must assume that $%
\sigma \left( \cdot \right) $ is uniformly elliptic.

Exact simulation of SDEs has generated a substantial amount of interest in
the applied probability and Monte Carlo simulation communities. The landmark
paper of \cite{beskos2005exact}, introduced what has become the standard
procedure for the design of generic exact simulation algorithms for
diffusions. The authors in \cite{beskos2005exact} propose a clever
acceptance-rejection sampler which uses Brownian motion as a proposal
distribution. The authors in \cite{chen2013localization} apply a
localization technique which eliminates certain boundedness assumptions
which are originally present in \cite{beskos2005exact}; see also \cite%
{beskos2006retrospective} for the use of retrospective simulation ideas to
dispense with boundedness assumptions.

The fundamental assumption underlying the work of \cite{beskos2005exact} and its
extensions is that the underlying (target) process has a constant diffusion
coefficient, i.e., $\sigma \left( x\right) =\sigma $ for every $x$. Beskos
and Roberts \cite{beskos2005exact} note that in the case $d=1$, owing to
Lamperti's transformation, the constant diffusion coefficient assumption
comes basically at no cost in generality.

Unfortunately, however, Lamperti's transformation is only generally
applicable in one dimension. In fact, \cite{ait2008closed} characterizes the
multidimensional diffusions for which Lamperti's transformation can be
successfully applied and these models are very restrictive.

Moreover, even if Lamperti's transformation is applicable in a
multidimensional setting, another implicit assumption in the application of
the Beskos and Roberts acceptance-rejection procedure is that the
drift coefficient $\mu \left( \cdot \right) $ is the gradient of some
function (i.e. $\mu \left( x\right) =\nabla v\left( x\right) $ for some $%
v\left( \cdot \right) $). This assumption, once again, comes at virtually no
cost in generality in the one-dimensional setting, but it may be very
restrictive in the multidimensional case.

Because of these limitations, a generic algorithm for exact simulation of
multidimensional diffusions, even under the regularity conditions that we
impose here, requires a completely different set of ideas.

The contribution in this paper is therefore not only the production of such a
generic exact simulation algorithm, but also the development of the ideas that are behind its
construction. In Section \ref{Sec_1_Identity} we introduce an algorithm which assumes a constant diffusion coefficient, but 
removes the assumption of the drift coefficient being the gradient of some function. In Section \ref{sec:general}, we eventually remove the requirement on a constant diffusion matrix and propose an algorithm applicable to general diffusions. The algorithms in Section \ref{Sec_1_Identity} and Section \ref{sec:general} are different in nature. However, they share some common elements, such as the use of so-called Tolerance-Enforced Simulation techniques based on rough path estimates. Even though the algorithm in Section \ref{sec:general} is more general, we believe that there is significant value in developing the algorithm in Section \ref{Sec_1_Identity} because of two reasons. The first one is pedagogical, the algorithm in Section \ref{Sec_1_Identity} is easier to understand while building on a key idea, which involves localizing essential quantities within specific compact domains with probability one. The second reason is that the algorithm in Section \ref{Sec_1_Identity}, being simpler, may be subject to potential improvement methodologies to be pursued in future research.

Potential improvements are particularly interesting directions specially given that, unfortunately, the algorithms that we present have infinite expected termination time. We recognize that this issue should be resolved for the algorithms to be widely used in practice, and we discuss the elements which lead to infinite expected running time in Section \ref{Section_Conclusion_Discussions}. There are basically two types of elements that affect the running time of the algorithm in Section \ref{sec:general}, one of them has to do with the types of issues that arise when trying to fully remove the bias in Tolerance-Enforced Simulation and related approximations, and the other issue has to do with the use of a density approximation coupled with Bernoulli factories. In contrast, the algorithm in Section \ref{Sec_1_Identity} is only affected by removing the bias in Tolerance-Enforced Simulation type approximations. We must stress, however, that the present paper shows for the first time that it is possible to perform exact sampling of multidimensional diffusions in substantial generality and, in doing so, it provides a conceptual framework different from the prevailing use of Lamperti transformation, which is the only available generic approach for producing exact sampling of diffusions.



Now, despite the algorithm's practical limitations, it is vital to
recognize the advantages that unbiased samplers have over biased samplers in the context of a massive parallel computing environment, because it is
straightforward to implement a parallel procedure to reduce the estimation
error of an unbiased sampler.

Recently, there have been several unbiased estimation procedures which have
been proposed for expectations of the form $\alpha =\mathbb{E}\left( f\left(
X\left( t\right) \right) \right) $, assuming $\text{Var}\left( f\left(
X\left( t\right) \right) \right) <\infty $. For example, the work of \cite%
{rhee2012new} shows that if $f\left( \cdot \right) $ is twice continuously
differentiable (with Lipschitz derivatives) and if there exists a
discretization scheme which can be implemented with a strong convergence
error of order 1, then it is possible to construct an unbiased estimator for
$\alpha $ with finite variance and finite expected termination time. The
work of \cite{giles2014antithetic} shows that such a discretization scheme
can be developed if $\mu \left( \cdot \right) $ and $\sigma \left( \cdot
\right) $ are sufficiently smooth under certain boundedness assumptions. The
paper \cite{henry2015unbiased} also develops an unbiased estimator for $%
\alpha $ using a regime-switching technique. Our work here is somewhat
related to this line of research, but an \textit{important difference} is
that \textit{we actually generate }$X\left( t\right) $ exactly, while
\textit{all} of the existing algorithms which apply in multidimensional
diffusion settings generate $Z$ such that $\mathbb{E}\left( Z\right) =\alpha
$. So, for example, if $f\left( \cdot \right) $ is positive, one cannot
guarantee that $Z$ is positive using the type of samplers suggested in \cite%
{rhee2012new}. However, by sampling $X\left( t\right) $ directly, one maintains the positivity of the estimator.

Another instance in which direct exact samplers are useful arises in the
context of stochastic optimization. For instance, consider the case in which
one is interested in optimizing a convex function of the form $g\left(
\theta \right) =\mathbb{E}\left( h\left( X\left( t\right) ,\theta \right)
\right) $, where $h\left( x,\cdot \right) $ is differentiable. In this case,
one can naturally construct an estimator $Z\left( \theta \right) $ such that
$g\left( \theta \right) =\mathbb{E}\left( Z\left( \theta \right) \right) $
using the results in \cite{rhee2012new} and optimize the mapping $\theta
\rightarrow $ $n^{-1}\sum_{i=1}^{n}Z_{i}\left( \theta \right) $, which
unfortunately will typically not be convex. So, having access to a direct
procedure to sample $X\left( t\right) $ in this setting is particularly
convenient as convexity is preserved.

The rest of the paper is organized as follows. In Section \ref%
{Sec_1_Identity}, we consider the case of multidimensional diffusions with a
constant diffusion coefficient and a Lipschitz continuous (suitably smooth)
drift. The general case is discussed in Section \ref{sec:general}, our
development uses localization ideas which are introduced in Section \ref%
{Sec_1_Identity}, but also some basic estimates of the transition density of
the underlying diffusion (e.g. Lipschitz continuity), these estimates are
developed in Appendix \ref{Appendix-Tech}. As mentioned before we discuss
the bottlenecks in the expected running time of the algorithm in Section \ref%
{Section_Conclusion_Discussions}.

\section{Exact Simulation of SDEs with Identity Diffusion Coefficient\label{Sec_1_Identity}}

In case that the Lamperti's transformation is applicable, the SDE of interest is
reducible to another SDE whose diffusion matrix is the identity. As a
result, it suffices to consider simulating the following SDE
\begin{equation}
dX(t)=\mu (X(t))dt+dW(t),\quad \quad X(0)=x_{0},  \label{eq:SDE-identity}
\end{equation}%
where $W=\left\{ W(t)=(W_{1}(t),\cdots ,W_{d}(t)):0\leq t<\infty \right\} $
is a $d$-dimensional Brownian motion. In this section we concentrate on the
identity diffusion case \eqref{eq:SDE-identity}, but the development can be
immediately extended to the case of a constant diffusion matrix. However,
throughout this section we must impose the following assumptions.

\begin{assumption}
\label{assumption:Lipchitz} The SDE \eqref{eq:SDE-identity} has a strong solution.
\end{assumption}

\begin{assumption}
\label{assumption:mu} The drift coefficient $\mu(\cdot)$ is three times
continuously differentiable.
\end{assumption}

Assumption \ref{assumption:mu} is the requirement of
TES, the theoretical foundation of our algorithm, which we shall introduce
later.

Let us introduce some notations first. For any set $G$ and $x\in \mathbb{R}%
^{d}$, we use $d(x,G)=\inf \left\{ \Vert x-y\Vert _{2}:y\in G\right\} $ to
denote the distance between $x$ and $G$; $\mathring{G}$ denotes the interior
of $G$; $\partial G$ denotes the boundary of $G$; $G^{c}$ denotes
complementary of $G$.

Consider a probability space $(\Omega ,\mathcal{F},\tilde{\mathbb{P}})$
endowed with a filtration $\left\{ \mathcal{F}_{t}:0\leq t\leq 1\right\} $,
and supporting a $d$-dimensional Brownian motion
\begin{equation*}
X(t)=(X_{1}(t),\cdots ,X_{d}(t));\quad 0\leq t\leq 1.
\end{equation*}

Let $\left\{ L(t):0\leq t\leq 1\right\} $ to be a $\tilde{\mathbb{P}}$-local
martingale defined as
\begin{equation}
L(t)=\exp \left( \int_{0}^{t}\mu ^{T}(X(t))dX(t)-\frac{1}{2}%
\int_{0}^{t}\Vert \mu (X(t))\Vert _{2}^{2}dt\right) ,
\label{Eqn-Likelihood-Ratio}
\end{equation}%
where $\mu ^{T}(\cdot )$ denotes the transpose of the column vector $\mu
(\cdot )$. Under Assumption \ref{assumption:Lipchitz}, $L(\cdot )$ is a $%
\tilde{\mathbb{P}}$-martingale, see Corollary 3.5.16 of \cite%
{karatzas2012brownian}.

In this case we can define a probability measure $\mathbb{P}$ through
\begin{equation*}
\mathbb{P}(A)=\mathbb{E}^{\tilde{\mathbb{P}}}\left[ I(A)L(1)\right] ;\quad
\quad \forall A\in \mathcal{F},
\end{equation*}%
where $I(A)$ denotes the indicator function of the set $A$ and $\mathbb{E}^{%
\tilde{\mathbb{P}}}\left( \cdot \right) $ is the expectation operator under $%
\tilde{\mathbb{P}}$.

Let
\begin{equation*}
W(t)=(W_{1}(t),\cdots ,W_{d}(t));\quad 0\leq t\leq 1
\end{equation*}%
be a $d$-dimensional process defined by
\begin{equation}
W(t)=X(t)-\int_{0}^{t}\mu (X(s))ds;\quad 0\leq t\leq 1.  \label{Eqn-Def-W}
\end{equation}%
The following theorem provides the distribution of $W(\cdot )$.

\begin{theorem}[Girsanov Theorem]
\label{thm:girsanov} If Assumption \ref{assumption:Lipchitz} is satisfied,
then the process $W(\cdot)$ is a $d$-dimensional Brownian motion on
probability space $(\Omega, \mathcal{F},\mathbb{P}).$
\end{theorem}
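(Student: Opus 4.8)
The plan is to read the statement as a direct instance of the Cameron--Martin--Girsanov theorem and to spend the work on checking its hypotheses under Assumption~\ref{assumption:Lipchitz}. Since $X=(X_1,\dots,X_d)$ is a $d$-dimensional Brownian motion under $\tilde{\mathbb{P}}$, we have $\langle X_i,X_j\rangle_t=\delta_{ij}\,t$. Introduce the continuous $\tilde{\mathbb{P}}$-local martingale
\[
M(t)=\int_0^t \mu^T(X(s))\,dX(s)=\sum_{i=1}^d\int_0^t \mu_i(X(s))\,dX_i(s),\qquad 0\le t\le 1,
\]
with quadratic variation $\langle M\rangle_t=\int_0^t \|\mu(X(s))\|_2^2\,ds$. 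Then $L(\cdot)$ from \eqref{Eqn-Likelihood-Ratio} is exactly the Dol\'eans--Dade exponential $\mathcal{E}(M)$, i.e. $L(t)=\exp\!\big(M(t)-\tfrac12\langle M\rangle_t\big)$, so $dL(t)=L(t)\,dM(t)$ and $L(0)=1$.

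First I would record that $L$ is a true $\tilde{\mathbb{P}}$-martingale, not merely a local one. This is precisely Corollary~3.5.16 of \cite{karatzas2012brownian}, applicable because the Lipschitz hypothesis forces $\mu$ to have at most linear growth, which supplies the exponential integrability needed for a Novikov-type criterion on the bounded horizon $[0,1]$. Consequently $\mathbb{E}^{\tilde{\mathbb{P}}}[L(1)]=L(0)=1$, so $\mathbb{P}(A)=\mathbb{E}^{\tilde{\mathbb{P}}}[I(A)L(1)]$ does define a probability measure on $\mathcal{F}$, equivalent to $\tilde{\mathbb{P}}$ with Radon--Nikodym density $L(1)$.

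Next I would invoke Girsanov's theorem (Theorem~3.5.1 of \cite{karatzas2012brownian}): for each $i$, since $X_i$ is a continuous $\tilde{\mathbb{P}}$-local martingale, the process
\[
\widetilde{W}_i(t)=X_i(t)-\langle X_i,M\rangle_t,\qquad 0\le t\le 1,
\]
is a continuous $\mathbb{P}$-local martingale. Computing $\langle X_i,M\rangle_t=\sum_{j}\int_0^t \mu_j(X(s))\,d\langle X_i,X_j\rangle_s=\int_0^t \mu_i(X(s))\,ds$ identifies $\widetilde{W}_i$ with $W_i$ as defined in \eqref{Eqn-Def-W}. Moreover quadratic covariation is invariant under an equivalent change of measure, so $\langle W_i,W_j\rangle_t=\langle X_i,X_j\rangle_t=\delta_{ij}\,t$ under $\mathbb{P}$ as well. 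Thus $W=(W_1,\dots,W_d)$ is a continuous $\mathbb{P}$-local martingale started at the origin with $\langle W_i,W_j\rangle_t=\delta_{ij}\,t$, and L\'evy's characterization theorem then identifies $W$ as a $d$-dimensional $\mathbb{P}$-Brownian motion, which is the assertion.

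The one genuinely delicate point is the true-martingale property of $L$ in the second paragraph: without it $\mathbb{P}$ fails to be a probability measure and the whole change-of-measure argument collapses; this is the single place where Assumption~\ref{assumption:Lipchitz} is essential. Everything after that is bookkeeping with quadratic variations and a citation of the standard Girsanov/L\'evy machinery.
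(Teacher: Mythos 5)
Your proposal is correct and follows exactly the route the paper intends: the paper proves this theorem by citing Theorem 3.5.1 of \cite{karatzas2012brownian} (having already noted, via Corollary 3.5.16, that Assumption \ref{assumption:Lipchitz} makes $L$ a true martingale), and your argument is simply the standard expansion of that citation into the Dol\'eans--Dade exponential, the cross-variation computation, and L\'evy's characterization. No gaps.
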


\begin{proof}
See, for instance, Theorem 3.5.1 of \cite{karatzas2012brownian}.
\end{proof}

It is readily apparent from \eqref{Eqn-Def-W} that $X(\cdot )$ is a weak
solution to SDE \eqref{eq:SDE-identity} on the probability space $(\Omega ,%
\mathcal{F},\mathbb{P})$. The exact simulation problem becomes sampling $%
X(1) $ under measure $\mathbb{P}$. Since $X(1)$ follows a normal
distribution under measure $\tilde{\mathbb{P}}$, we can attempt to use acceptance-rejection to
sample $X(1)$. A direct application of acceptance-rejection may proceed by using the $\tilde{%
\mathbb{P}}$ distribution of $X(1)$ (which is simply normal distribution) as the proposal, which, if acceptance-rejection is applicable,
would then be accepted with probability proportional to $L(1)$. However,
there are two obstacles when trying to apply such a direct acceptance-rejection approach.
First, the presence of the general stochastic integral appearing in the
definition of $L\left( 1\right) $ makes the likelihood ratio difficult to
directly compute. Second, a direct application of acceptance-rejection requires the likelihood
ratio, $L(1)$, to be bounded, which is unfortunately violated.

In order to deal with the first obstacle, we note that it is really not
necessary to accurately evaluate the likelihood ratio. In the standard procedure of acceptance-rejection, the likelihood ratio is only used for comparison with an independent
uniform random variable. Thus, to address the first obstacle, we can
approximate the likelihood ratio with a deterministic error bound, and keep
refining until we can decide to either accept or reject the proposal. It turns out, as we shall see in Corollary \ref{thm:localization}, that the same approximation technique can actually be used to localize $L\left( 1\right)$ and also resolve the second obstacle. Then, we will sample the distribution of $X(1)$ conditional on the localization of $L\left(1\right)$ using acceptance-rejection, where the rejection scheme is suggested by Lemma \ref{thm:girsanov}.

The theoretical foundation for such approximation and refinement strategy is
given by Tolerance-Enforced Simulation, which is presented in Theorem \ref%
{thm:TES}.

\begin{theorem}[Tolerance-Enforced Simulation]
\label{thm:TES} Consider a probability space $(\Omega ,\mathcal{F},\mathbb{P}%
)$ and the following SDE:
\begin{equation}
dY(t)=\alpha (Y(t))dt+\nu (Y(t))dW(t),\quad Y(t)=y_{0}  \label{Eqn-SDE-TES}
\end{equation}%
where $\alpha (\cdot )=(\alpha _{i}(\cdot ))_{d}:\mathbb{R}^{d}\rightarrow
\mathbb{R}^{d}$, $\nu (\cdot )=(\nu _{ij}(\cdot ))_{d\times d^{\prime }}:%
\mathbb{R}^{d}\rightarrow \mathbb{R}^{d\times d^{\prime }}$ and $W(\cdot )$
is a $d'$-dimensional Brownian motion. Suppose that $\alpha (\cdot )$ is
continuously differentiable and that $\nu (\cdot )$ is three times
continuously differentiable. Then, given any deterministic $\varepsilon >0$,
there is an explicit Monte Carlo procedure that allows us to simulate a
piecewise constant process $Y_{\varepsilon }(\cdot )$, such that
\begin{equation*}
\sup_{t\in \lbrack 0,1]}\Vert Y_{\varepsilon }(t)-Y(t)\Vert _{2}\leq
\varepsilon
\end{equation*}%
with probability one. Furthermore, for any $m>1$ and $0<\varepsilon
_{m}<\dots <\varepsilon _{1}<1$, we can simulate $Y_{\varepsilon _{m}}$
conditional on $Y_{\varepsilon _{1}},\dots ,Y_{\varepsilon _{m-1}}$.
\end{theorem}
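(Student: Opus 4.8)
The plan is to exploit the fact that the solution $Y(\cdot)$ of \eqref{Eqn-SDE-TES} is a \emph{continuous} (indeed locally Lipschitz) deterministic image of the Brownian rough path, so that $\varepsilon$-strong simulation of $Y$ reduces to $\delta$-strong simulation of the driving signal in a rough-path metric for a suitable $\delta=\delta(\varepsilon)$. First I would rewrite \eqref{Eqn-SDE-TES} in Stratonovich form, $dY(t)=\tilde\alpha(Y(t))\,dt+\nu(Y(t))\circ dW(t)$, where the It\^o--Stratonovich correction $\tilde\alpha=\alpha-\tfrac12(\text{term built from }\nu\text{ and }\nabla\nu)$ is well defined and still $C^{1}$ with Lipschitz derivative because $\nu\in C^{3}$. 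By Lyons' universal limit theorem the It\^o--Lyons solution map $\mathbf{W}\mapsto Y$ is, on bounded sets, Lipschitz from the space of geometric $p$-rough paths ($2<p<3$) over $\R^{d}$ --- here $\mathbf{W}=(W,\mathbb{A})$ with $\mathbb{A}$ the L\'evy-area (second-level) process --- into $C([0,1];\R^{d})$, with a modulus of continuity expressible explicitly in terms of $\|\tilde\alpha\|_{C^{1}}$, $\|\nu\|_{C^{3}}$ and the $p$-variation norm of $\mathbf{W}$. Hence it suffices to produce a simulatable approximation $\hat{\mathbf{W}}^{(n)}$ together with a simulatable random variable $\Xi_{n}$ with $\rho_{p}(\mathbf{W},\hat{\mathbf{W}}^{(n)})\le\Xi_{n}$ almost surely and $\Xi_{n}\downarrow 0$ a.s.; feeding $\Xi_{n}$ through the explicit modulus, and adding the (deterministic, controllable) error from numerically solving the driving ODE and recording a piecewise constant skeleton $Y_{\varepsilon}$, yields $\sup_{t\in[0,1]}\|Y_{\varepsilon}(t)-Y(t)\|_{2}\le\varepsilon$ once $n$ is large enough --- a stopping rule that terminates with probability one.

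The construction of $(\hat{\mathbf{W}}^{(n)},\Xi_{n})$ is a recursive dyadic bisection scheme. At level $0$ sample the Gaussian increment $W(1)-W(0)$ and the L\'evy areas $\mathbb{A}_{ij}(0,1)$. Passing from level $n$ to level $n+1$, on each dyadic interval $\intjn$ one samples the midpoint value of $W$ and the L\'evy-area increments over each half conditionally on the already-revealed endpoint data; all these conditional laws are explicit (Brownian bridge for the path component, and the tractable conditional law of a L\'evy-area increment of a Brownian bridge). Let $\hat{\mathbf{W}}^{(n)}$ be the canonical (step-2) lift of the piecewise linear interpolation of $W$ through the level-$n$ dyadic points. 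The bound $\Xi_{n}$ is assembled from (a) the ranges (max minus min) of the Brownian bridges filling the level-$n$ dyadic intervals, which can be sampled exactly and control the uniform and $p$-variation distance of the first level, and (b) analogous oscillation bounds for the second-level increments on those intervals, controlling the $p/2$-variation distance of the second level; standard rough-path estimates then bound $\rho_{p}(\mathbf{W},\hat{\mathbf{W}}^{(n)})$ by an explicit function of these quantities, and $\Xi_{n}\to0$ a.s.\ follows from the a.s.\ uniform continuity of the Brownian rough path together with Borel--Cantelli.

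For the ``furthermore'' part, observe that the bisection scheme is by construction a \emph{consistent}, Kolmogorov-extendable family: the level-$(n+1)$ data is drawn from fixed conditional kernels given the level-$n$ data, so there is one well-defined law for the whole sequence $(\hat{\mathbf{W}}^{(n)})_{n\ge0}$, hence for $(Y_{\varepsilon_{1}},\dots,Y_{\varepsilon_{m}})$, each $Y_{\varepsilon_{k}}$ being a deterministic functional of $\hat{\mathbf{W}}^{(n_{k})}$ for the (a.s.\ finite, nondecreasing in $k$) level $n_{k}$ selected by the stopping rule. Thus, having generated $Y_{\varepsilon_{1}},\dots,Y_{\varepsilon_{m-1}}$, one simply continues the same bisection from level $n_{m-1}$ onward, drawing the extra conditional midpoint/area variables, until the stopping rule for $\varepsilon_{m}$ fires; this produces $Y_{\varepsilon_{m}}$ with the correct conditional law.

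I expect the main obstacle to be item (b): obtaining an exactly simulatable, almost sure upper bound on the $p/2$-variation distance between the true L\'evy-area process and its piecewise-linear surrogate on the dyadic grid. Controlling the first level is essentially the known $\varepsilon$-strong simulation of Brownian motion via Brownian-bridge ranges, but the second level requires simulating (or at least exactly dominating) the oscillation of L\'evy-area increments conditionally on the revealed path data, showing the resulting bound is computable and tends to $0$ a.s., and then verifying that the composite bound $\Xi_{n}$ fed through Lyons' modulus genuinely decreases to $0$ fast enough that the stopping rule is almost surely finite --- this is where the rough-path estimates must be made fully quantitative.
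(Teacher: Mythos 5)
The paper does not actually prove this theorem: its ``proof'' is a one-line citation to Theorem 2.1, Theorem 2.2 and Section 2.1 of \cite{blanchet2014epsilon}, where the $\varepsilon$-strong (tolerance-enforced) simulation scheme is constructed. Your outline is a faithful reconstruction of the strategy of that reference --- Stratonovich rewriting, quantitative continuity of the It\^o--Lyons map (in \cite{blanchet2014epsilon} this is taken in the form of Davie's explicit estimates rather than Lyons' universal limit theorem, but the role is identical), dyadic refinement of the driving signal, and a stopping rule based on a simulatable almost-sure error dominator. The ``furthermore'' part is also argued there exactly as you describe, by consistency of the conditional refinement kernels.

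However, as a proof your proposal has a genuine gap, and you have located it yourself: item (b), the construction of an \emph{exactly simulatable} random variable that almost surely dominates the second-level (L\'evy-area) discrepancy between the Brownian rough path and its piecewise-linear lift, and tends to zero. This is not a routine quantification step --- it is the main technical content of \cite{blanchet2014epsilon}. Two specific difficulties are glossed over. First, you assert that the conditional law of the L\'evy-area increments given revealed endpoint data is ``tractable''; jointly exact sampling of all pairwise areas together with the increments, \emph{and} together with computable a.s. bounds on their oscillation, is delicate in dimension $d>2$ and requires the dedicated construction in the cited work (a sequential acceptance/rejection scheme conditioning on dyadic tolerance events, combined with a ``record-breaker'' argument showing that only finitely many dyadic levels violate prescribed H\"older-type thresholds, the last such level being itself simulatable). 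Second, without that construction one cannot verify that the composite bound $\Xi_n$, pushed through the modulus of continuity of the solution map, decreases to zero fast enough for the stopping rule to be a.s. finite. So your write-up correctly identifies the architecture of the argument but leaves its load-bearing component as an acknowledged obstacle rather than a proved lemma; to be complete it would have to import or reprove the joint path/area $\varepsilon$-strong simulation result of \cite{blanchet2014epsilon}.
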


\begin{proof}
See Theorem 2.1, Theorem 2.2 and Section 2.1 of \cite{blanchet2014epsilon},
where a detailed procedure of Tolerance-Enforced Simulation is also provided.
\end{proof}
\begin{remark}\label{remark-TES}
	
	Tolerance-Enforced Simulation is based on the L\'evy-Ciesielski Construction
	of the driving Brownian motion $W(\cdot)$ up to a random level. Consequently,
	$W(1)$ is obtained for free when we run TES in which a skeleton of the driving Brownian motion $W(\cdot)$ is simulated. In particular, for any $m>1$ and
	$0<\varepsilon_{m}<\dots <\varepsilon _{1}<1$, we can simulate $Y_{\varepsilon _{m}}$
	conditional on $Y_{\varepsilon _{1}},\dots ,Y_{\varepsilon _{m-1}}$ and $W(1)$.
	
\end{remark}
As a straightforward consequence of Theorem \ref{thm:TES}, we develop a
localization procedure of SDE in Corollary \ref{thm:localization}. Before
moving forward to state the result, we define some notations that will be
therein used.

\begin{definition}
A family of (Borel measurable) sets $\mathcal{G}=\{G_{i}\subset \mathbb{R}%
^{d}:i\in \mathbb{N}\}$ is said to be a countable continuous partition for a
$d$-dimensional random vector $Y$, if and only if

\begin{enumerate}
\item The sets in $\mathcal{G}$ are mutually disjoint, i.e. $G_{i}\cap G_{j}
= \emptyset$ for $i\neq j$;

\item $Y$ is concentrated on $\mathcal{G}$, that is $\mathbb{P}(Y\in \cup
_{i\in \mathbb{N}}G_{i})=1$;

\item $\mathbb{P}(Y\in \partial G_{i})=0,\forall i\in \mathbb{N}$.
\end{enumerate}

In addition, a function $\Xi_{\mathcal{G}}(x): \text{supp}(Y)\rightarrow
\mathbb{N}$ is defined such that $\Xi_{\mathcal{G}}(x) = i$ if and only if $%
x\in G_i$.
\end{definition}

\begin{corollary}
\label{thm:localization}Under the setting of Theorem \ref{thm:TES}, let $%
\mathcal{G}=\{G_{i}:i\in \mathbb{N}\}$ be a countable continuous partition
for $Y(1)$, then there is an algorithm for simulating $\Xi _{\mathcal{G}%
}(Y(1))$ that terminates in finite time with probability one. In particular,
for any set $G$ such that $\mathbb{P}(Y(1)\in \partial G)=0$, there is an
algorithm for simulating $I(Y(1)\in G)$.

\end{corollary}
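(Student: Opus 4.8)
The plan is to run Tolerance-Enforced Simulation (Theorem \ref{thm:TES}) along a sequence of shrinking tolerances and to stop as soon as the resulting confidence ball for $Y(1)$ is trapped inside a single cell of the partition. Concretely, I would take $\varepsilon _n=2^{-n}$ and, using the conditional-refinement part of Theorem \ref{thm:TES}, generate piecewise constant processes $Y_{\varepsilon _1},Y_{\varepsilon _2},\dots$ with $\sup_{t\in[0,1]}\Vert Y_{\varepsilon _n}(t)-Y(t)\Vert _2\le \varepsilon _n$ almost surely. Only the endpoints matter: at stage $n$ the point $Y_{\varepsilon _n}(1)$ certifies that $Y(1)$ lies in the closed ball $\bar B_n=\{y:\Vert y-Y_{\varepsilon _n}(1)\Vert _2\le \varepsilon _n\}$. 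The algorithm then checks whether $Y_{\varepsilon _n}(1)\in \mathring G_i$ for some (necessarily unique) $i$ and, if so, whether $d(Y_{\varepsilon _n}(1),G_i^c)>\varepsilon _n$; if both hold it outputs $i$ and halts, and otherwise it moves on to stage $n+1$.

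Correctness would be immediate: the stopping condition $d(Y_{\varepsilon _n}(1),G_i^c)>\varepsilon _n$ forces $\bar B_n\cap G_i^c=\emptyset$, hence $\bar B_n\subseteq G_i$, and since $Y(1)\in \bar B_n$ we get $Y(1)\in G_i$, i.e. $\Xi _{\mathcal G}(Y(1))=i$. For almost sure finiteness, I would first observe that properties (2) and (3) of a countable continuous partition give $\mathbb{P}(Y(1)\in \bigcup_i\mathring G_i)=1$, since $G_i\setminus \partial G_i=\mathring G_i$ and a countable union of $Y(1)$-null boundaries is $Y(1)$-null. On that full-probability event pick $i$ with $Y(1)\in \mathring G_i$ and $\delta>0$ with $d(Y(1),G_i^c)\ge \delta$; then for every $n$ with $\varepsilon _n<\delta/2$ the triangle inequality gives $d(Y_{\varepsilon _n}(1),G_i^c)\ge \delta-\varepsilon _n>\varepsilon _n$, so the algorithm has stopped by stage $\lceil \log_2(2/\delta)\rceil$ and therefore terminates in finite time with probability one. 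The last assertion then follows by applying this to the two-set (hence countable) partition $\mathcal G=\{G,G^c\}$, which is admissible because it is disjoint, covers $\mathbb{R}^d\supseteq \mathrm{supp}(Y(1))$, and $\partial G=\partial G^c$ is $Y(1)$-null; then $I(Y(1)\in G)=I(\Xi _{\mathcal G}(Y(1))=1)$.

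The main obstacle I anticipate is the termination argument: this is where conditions (2)--(3) of the partition are genuinely used (to guarantee that $Y(1)$ almost surely falls into the open interior of some cell, producing a strictly positive clearance $d(Y(1),G_i^c)$) and where one must convert that a.s. clearance into a halting time via the shrinking tolerances. A secondary point to state carefully is the implicit computational model: the argument assumes the partition is presented so that, for a given point, one can decide membership in each $\mathring G_i$ and evaluate the distances $d(\cdot ,G_i^c)$ — this is the precise sense in which the algorithm ``knows'' $\mathcal G$.
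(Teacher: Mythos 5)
Your proposal is correct and follows essentially the same route as the paper: apply TES with halving tolerances, stop once $d(Y_{\varepsilon}(1),G_i^c)>\varepsilon$ certifies that the confidence ball lies inside a single cell, and derive almost sure termination from the fact that $Y(1)$ falls in some $\mathring G_i$ with positive clearance because the boundaries are $Y(1)$-null. Your write-up of the termination step (extracting $\delta>0$ and bounding the stopping stage by $\lceil\log_2(2/\delta)\rceil$) is in fact slightly more explicit than the paper's.
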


\begin{proof}
Notice that $\mathbb{P}(Y(1)\in \partial G_i) = 0$, so $Y(1)\in
\bigcup_{i\in \mathbb{N}} \mathring{G}_i$ holds almost surely. Recalling
from Theorem \ref{thm:TES} that $\|Y_{\varepsilon}(1)-Y(1)\|_2\leq
\varepsilon$ a.s., which suggests that
\begin{equation*}
\mathbb{P}\left(\{\omega\in \Omega:Y(1)\in \mathring{G}_i\}\right) = \mathbb{%
P}\left(\bigcup_{\varepsilon>0} \{\omega\in \Omega:
d(Y_{\varepsilon}(1),G_i^c)>\epsilon \}\right).
\end{equation*}

Thus, we pick $\varepsilon \in (0,1)$ and apply TES to simulate the
approximation process $Y_{\varepsilon }(1)$. If
\begin{equation*}
d\Big(Y_{\varepsilon }(1),G_{\Xi _{\mathcal{G}}(Y_{\varepsilon
}(1))}^{c}\Big)>\varepsilon,
\end{equation*}
then $\Xi _{\mathcal{G}}(Y(1))=\Xi _{\mathcal{G}}(Y_{\varepsilon }(1))$,
which terminates the algorithm. Otherwise we keep refining the approximation
of TES, by setting $\varepsilon \leftarrow \varepsilon /2$, until $%
d\Big(Y_{\varepsilon }(1),G_{\Xi _{\mathcal{G}}(Y_{\varepsilon
}(1))}^{c}\Big)>\varepsilon$. The algorithm will ultimately terminate since
\begin{equation*}
\mathbb{P}\left( \bigcup_{i\in \mathbb{N}}\bigcup_{\varepsilon >0}\{\omega
\in \Omega :d(Y_{\varepsilon }(1),G_{i}^{c})>\epsilon \}\right) =1.
\end{equation*}%
The procedure for simulation of $I(Y(1)\in G)$ is just a particular case, by
setting $\mathcal{G}=\{G,G^{c}\}$. The details of the algorithm are given in
Algorithm \ref{algo:localization}.
\end{proof}

\begin{algorithm}
		\caption{Localization of SDE over Countable Continuous Partition $\mathcal{G}$}
		\label{algo:localization}
		\begin{algorithmic}[1]
			\State \textbf{Initialize} $\varepsilon\gets 1/2$.
			\State Apply TES to simulate $Y_{\varepsilon}(1)$,
			$i\leftarrow\Xi_\mathcal{G}(Y_\varepsilon(1))$.
			\While {$d(Y_{\varepsilon}(1),G_{i}^c)\leq\varepsilon$}
			\State Apply TES to simulate $Y_{\varepsilon/2}(1)$ conditional on $Y_{1/2}(1),\dots,Y_{\varepsilon}(1)$.
			\State $i\leftarrow\Xi_\mathcal{G}(Y_{\varepsilon/2}(1))$
			\State$\varepsilon\gets\varepsilon/2$.
			\EndWhile
			\State \textbf{Output} $i$.
		\end{algorithmic}
	\end{algorithm}

The algorithm for simulating $X(1)$ is performed in a two-stage fashion. At
first stage, the likelihood ratio $L(1)$ is localized with the help of
Corollary \ref{thm:localization}. (The efficiency of the algorithm may be slightly improved if we localize $X(1)$ and $L(1)$ simultaneously at the first step, then applying acceptance-rejection based on localization. However, this does not solve the problem of the infinite expected running time.) Then, at second stage, $X(1)$ is sampled
conditional on the result of localization.

We now illustrate how to localize $L(1)$ in detail. In order to write the
dynamics of $Y(1)$ in standard form as in \eqref{Eqn-SDE-TES}, we consider
the SDE of $(L(\cdot ),X(\cdot ))$ under measure $\mathbb{P}$ as follows,
\begin{align}  \label{Eqn-Likelihood-Ratio-Diff}
\begin{cases}
dL(t) = L(t)\|\mu(X(t))\|_2^2dt + L(t)\mu^{T}(X(t))dW(t), \\
dX(t) = \mu(X(t))dt + dW(t),%
\end{cases}%
\end{align}

Let $\mathcal{G} = \{G_i = [i,i+1)\times \mathbb{R}^d: i\in \mathbb{N} \}$ in
the rest of this section. As \eqref{Eqn-Likelihood-Ratio} guarantees that $%
L(1)$ is non-negative, it follows immediately that $\mathcal{G}$ is a
countable continuous partition for $L(1)$. Therefore, Algorithm \ref%
{algo:localization} is directly applicable to sample $\Xi_\mathcal{G}%
((L(1),X(1)))$ using SDE \eqref{Eqn-Likelihood-Ratio-Diff}. Without loss of
generality, we assume $\Xi_\mathcal{G}((L(1),X(1))) = i$ in the rest of this
section. It remains to sample $X(1)$ conditional on $\Xi_\mathcal{G}%
((L(1),X(1))) = i$ under probability measure $\mathbb{P}$.

The following lemma provides an alternative expression of the conditional
distribution of $X(1)$, which facilitates the simulation of $X(1)$ conditional on localization.

\begin{lemma}
\label{Lemma-Conditional-Probability} Let $U\sim\textrm{Unif}\;(0,1)$
independent of everything else under probability measure $\tilde{\mathbb{P}}$%
, then we have
\begin{equation*}
\mathbb{P}\Big(X(1)\in dx\Big|\Xi_\mathcal{G}\big((L(1),X(1))\big) = i\Big) = \tilde{\mathbb{P}}%
\Big(X(1)\in dx\Big|\max\big(i,(i+1)U\big)<L(1)<i+1\Big)
\end{equation*}
\end{lemma}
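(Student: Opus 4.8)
The plan is to push everything through the defining change of measure $\mathbb{P}(A)=\mathbb{E}^{\tilde{\mathbb{P}}}[I(A)L(1)]$ and to recognize the conditioning event on the right-hand side as a device that, after integrating out the auxiliary uniform $U$, reproduces exactly the Radon--Nikodym weight $L(1)$ restricted to the slab $\{L(1)\in[i,i+1)\}$.

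First I would note that, with $\mathcal{G}=\{[i,i+1)\times\mathbb{R}^{d}:i\in\mathbb{N}\}$, the event $\{\Xi_\mathcal{G}((L(1),X(1)))=i\}$ is just $\{L(1)\in[i,i+1)\}$, and that since $L(1)>0$ a.s. and $\mathbb{P}(L(1)\in\partial G_i)=0$ (equivalently $\tilde{\mathbb{P}}(L(1)=i)=\tilde{\mathbb{P}}(L(1)=i+1)=0$, because $d\mathbb{P}=L(1)\,d\tilde{\mathbb{P}}$), the choice of open or closed endpoints is immaterial; I will write the slab as $\{i<L(1)<i+1\}$. Then, for any bounded measurable $g:\mathbb{R}^{d}\to\mathbb{R}$, the change of measure gives
\[
\mathbb{E}^{\mathbb{P}}[g(X(1))\,I(i<L(1)<i+1)]=\mathbb{E}^{\tilde{\mathbb{P}}}[g(X(1))\,L(1)\,I(i<L(1)<i+1)].
\]

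The key step is the $U$-trick. Let $\sigma(X)$ denote the $\sigma$-field generated by the whole path $X$; under $\tilde{\mathbb{P}}$, $U$ is independent of $\sigma(X)$ and $L(1)$ is $\sigma(X)$-measurable. Since $\{\max(i,(i+1)U)<L(1)<i+1\}=\{i<L(1)<i+1\}\cap\{(i+1)U<L(1)\}$ and $(i+1)U$ is uniform on $(0,i+1)$, so that $\tilde{\mathbb{P}}((i+1)U<\ell)=\ell/(i+1)$ for $0<\ell<i+1$, one obtains $\tilde{\mathbb{P}}$-a.s.
\[
\tilde{\mathbb{P}}\left(\max(i,(i+1)U)<L(1)<i+1\,\middle|\,\sigma(X)\right)=\frac{L(1)}{i+1}\,I(i<L(1)<i+1).
\]
Multiplying by $g(X(1))$, which is $\sigma(X)$-measurable, and taking $\tilde{\mathbb{P}}$-expectations yields
\[
\mathbb{E}^{\tilde{\mathbb{P}}}[g(X(1))\,I(\max(i,(i+1)U)<L(1)<i+1)]=\frac{1}{i+1}\,\mathbb{E}^{\tilde{\mathbb{P}}}[g(X(1))\,L(1)\,I(i<L(1)<i+1)].
\]
Combining the three displays gives $\mathbb{E}^{\mathbb{P}}[g(X(1))\,I(i<L(1)<i+1)]=(i+1)\,\mathbb{E}^{\tilde{\mathbb{P}}}[g(X(1))\,I(\max(i,(i+1)U)<L(1)<i+1)]$. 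Taking $g\equiv 1$ gives the same relation for the normalizing constants (in particular $\tilde{\mathbb{P}}(\max(i,(i+1)U)<L(1)<i+1)=\frac{1}{i+1}\mathbb{P}(i<L(1)<i+1)$, which is positive since $i$ was realized by the localization step), and dividing the two identities cancels the factor $(i+1)$ and produces the claimed equality of conditional laws.

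The only real subtlety — hence the main obstacle — is the verification of the conditional-probability identity in the key step: one must use that $U$ is independent of the entire $\sigma$-field $\sigma(X)$ (not merely of $X(1)$), that $L(1)>0$ a.s. so the slab $\{i<L(1)<i+1\}$ genuinely carries positive $\tilde{\mathbb{P}}$-mass when it is being conditioned on, and that the boundary events $\{L(1)\in\{i,i+1\}\}$ are $\tilde{\mathbb{P}}$-null so that the interchange between half-open and open intervals is legitimate. Everything else is bookkeeping with the Girsanov density.
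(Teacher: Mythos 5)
Your proof is correct and follows essentially the same route as the paper: rewrite the $\mathbb{P}$-probability via the Girsanov weight $L(1)$, linearize that weight into a probability by introducing the auxiliary uniform $U$ (so that $\tilde{\mathbb{P}}((i+1)U<L(1)\mid\sigma(X))=L(1)/(i+1)$ on the slab), and then divide the resulting identity by the same identity with $g\equiv1$. Your version is slightly more careful than the paper's about the null boundary events and the positivity of the conditioning event, but the underlying argument is identical.
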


\begin{proof}
Due to the definition of conditional probability,
\begin{equation*}
\mathbb{P}\Big(X(1)\in dx\Big|\Xi _{\mathcal{G}}\big((L(1),X(1))\big)=i\Big)=\frac{\mathbb{P}%
\Big(X(1)\in dx;\Xi_{\mathcal{G}}\big((L(1),X(1))\big)=i\Big)}{\mathbb{P}\Big(\Xi_{\mathcal{G}}\big((L(1),X(1))\big)=i\Big)}.
\end{equation*}%
Recall that $d\tilde{\mathbb{P}}=L(1)d\mathbb{P}$, we have
\begin{equation*}
\mathbb{P}\Big(X(1)\in dx\Big|\Xi_{\mathcal{G}}\big((L(1),X(1))\big)=i\Big)=\frac{\mathbb{E}^{%
\tilde{\mathbb{P}}}\Big[L(1)I(X(1)\in dx;\Xi_{\mathcal{G}}\big((L(1),X(1)\big)=i\Big]}{%
\mathbb{P}\Big(\Xi_{\mathcal{G}}\big((L(1),X(1))\big)=i\Big)}.
\end{equation*}%
Since on $\Xi _{\mathcal{G}}((L(1),X(1))=i$,
\begin{equation*}
i\leq L(1)\leq i+1,
\end{equation*}%
we can rewrite the expectation into a probability by introducing $U\sim
\text{Unif}(0,1)$, namely,
\begin{align*}
& \mathbb{E}^{\tilde{\mathbb{P}}}\Big[L(1)I(X(1)\in dx;\Xi _{\mathcal{G}%
}\big((L(1),X(1))\big)=i)\Big] \\
=& (i+1)\tilde{\mathbb{P}}\Big(X(1)\in dx;\Xi _{\mathcal{G}%
}\big((L(1),X(1))\big)=i;(i+1)U<L(1)\Big) \\
=& (i+1)\tilde{\mathbb{P}}\Big(X(1)\in dx;\max \big(i,(i+1)U\big)<L(1)<i+1\Big).
\end{align*}%
By substitution, it follows easily that
\begin{align*}
\mathbb{P}\Big(X(1)\in dx\Big|\Xi _{\mathcal{G}}& ((L(1),X(1)))=i\Big)=\frac{(i+1)\tilde{%
\mathbb{P}}\Big(\max \big(i,(i+1)U\big)<L(1)<i+1\Big)}{\mathbb{P}\Big(\Xi _{\mathcal{G}%
}\big((L(1),X(1))\big)=i\Big)} \\
& \times \tilde{\mathbb{P}}\Big(X(1)\in dx\Big|\max \big(i,(i+1)U\big)<L(1)<i+1\Big).
\end{align*}%
It remains to prove that
\begin{equation*}
(i+1)\tilde{\mathbb{P}}\Big(\max \big(i,(i+1)U\big)<L(1)<i+1\Big)=\mathbb{P}\Big(\Xi _{\mathcal{G%
}}\big((L(1),X(1))\big)=i\Big).
\end{equation*}%
By a similar argument we can deduce that
\begin{align*}
& \mathbb{P}\Big(\Xi_{\mathcal{G}}\big((L(1),X(1)\big)=i\Big) \\
=& \mathbb{E}^{\tilde{\mathbb{P}}}\Big[L(1)I\big(\Xi_{\mathcal{G}}\big((L(1),X(1)\big)=i\big)\Big]
\\
=& (i+1)\tilde{\mathbb{P}}\Big(\Xi_{\mathcal{G}}\big((L(1),X(1)\big)=i;(i+1)U<L(1)\Big) \\
=& (i+1)\tilde{\mathbb{P}}\Big(\max \big(i,(i+1)U\big)<L(1)<i+1\Big),
\end{align*}%
which ends the proof.
\end{proof}

As a direct implication of Lemma \ref{Lemma-Conditional-Probability}, in
order to obtain an example sample for $X\left( 1\right) $ under $\mathbb{P}$%
, given $\Xi_{\mathcal{G}}\big((L(1),X(1)\big)=i$, we can simply simulate $X(1)$
conditional on $\max (i,(i+1)U)<L(1)<i+1$ under probability measure $\tilde{%
\mathbb{P}}$. In order to do this sampling under $\tilde{\mathbb{P}}$, we
can sample $U$ first and denote the value by $u$.
Then, observing that $X(\cdot)$ is the driving Brownian motion under the probability measure $\mathbb{P}$,
Algorithm \ref{algo:localization} is applied to the SDE
\begin{equation}  \label{Eqn-SDE-P-Prime}
dL(t)=L(t)\mu ^{T}(X(t))dX(t)
\end{equation}
to simulate the indicator function $I(\max (i,u)<L(1)<i+1)$. In addition, according to
Remark \ref{remark-TES}, when TES is employed in Algorithm \ref{algo:localization},
a sample of $X(1)$ is also produced simultaneously.
Thereafter, the value of $X(1)$ is accepted if and only if $I(\max (i,u)<L(1)<i+1)=1$;
otherwise we repeat the procedure in this paragraph, but we fix the parameter $i$,
because $\Xi _{\mathcal{G}}((L(1),X(1)))=i$ has already been sampled under
the correct distribution $\mathbb{P}$. The output of the algorithm, once the
value $X\left( 1\right)$ is finally accepted, follows the distribution of $X(1)$
under $\mathbb{P}$ without any bias.

We summarize the discussion in this section in the following theorem.
%
%

\begin{theorem}
\label{thm:constant-diffusion-exact-simulation} If Assumption \ref%
{assumption:Lipchitz} and \ref{assumption:mu} are satisfied, then there is
an exact simulation algorithm for $X(1)$ that terminates with probability
one, see Algorithm \ref{algo:constant-diffusion-exact-simulation}.
\end{theorem}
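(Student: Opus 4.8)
The plan is to stitch together the tools built in this section into the two-stage sampler of Algorithm \ref{algo:constant-diffusion-exact-simulation}, and then to check (i) that its output has the law of $X(1)$ under $\mathbb{P}$ and (ii) that it halts almost surely. Throughout, recall that by Theorem \ref{thm:girsanov} the process $W(\cdot)$ of \eqref{Eqn-Def-W} is a $\mathbb{P}$-Brownian motion, so $X(\cdot)$ is a weak solution of \eqref{eq:SDE-identity} under $\mathbb{P}$ and it is enough to produce a single draw of $X(1)$ under $\mathbb{P}$.

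\emph{First stage (localizing the likelihood ratio).} The pair $(L(\cdot),X(\cdot))$ solves the augmented system \eqref{Eqn-Likelihood-Ratio-Diff} under $\mathbb{P}$. I would first verify that this system meets the hypotheses of Theorem \ref{thm:TES}: the diffusion coefficient is constant, hence trivially three times continuously differentiable, and the drift $(l,x)\mapsto(l\,\|\mu(x)\|_2^2,\mu(x))$ is continuously differentiable by Assumption \ref{assumption:mu}. Next I would check that $\mathcal{G}=\{[i,i+1)\times\mathbb{R}^d:i\in\mathbb{N}\}$ is a countable continuous partition for $(L(1),X(1))$: the sets are disjoint, $L(1)\ge 0$ almost surely by \eqref{Eqn-Likelihood-Ratio} so they cover the support, and $\mathbb{P}((L(1),X(1))\in\partial G_i)=\mathbb{P}(L(1)=i)=0$ since $L(1)$ is an almost surely continuous, non-degenerate path functional and hence has an atomless law. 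Corollary \ref{thm:localization} then supplies an algorithm terminating almost surely that returns $i:=\Xi_{\mathcal{G}}((L(1),X(1)))$, sampled under $\mathbb{P}$.

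\emph{Second stage (conditional sampling of $X(1)$).} Given $\Xi_{\mathcal{G}}((L(1),X(1)))=i$, Lemma \ref{Lemma-Conditional-Probability} identifies the target law of $X(1)$ with the $\tilde{\mathbb{P}}$-law of $X(1)$ conditional on $\max(i,(i+1)U)<L(1)<i+1$, where $U\sim\text{Unif}(0,1)$ is independent of everything else under $\tilde{\mathbb{P}}$. I would therefore draw $u$, then run Algorithm \ref{algo:localization} on the system \eqref{Eqn-SDE-P-Prime} --- which under $\tilde{\mathbb{P}}$ has $X$ a Brownian motion and $L$ with zero drift and diffusion $L\,\mu^T(X)$, once again $C^3$ by Assumption \ref{assumption:mu} --- with the two-set partition $\{(l,x):\max(i,(i+1)u)<l<i+1\}$ versus its complement, which is again continuous for $(L(1),X(1))$ under $\tilde{\mathbb{P}}$ by the atomlessness of $L(1)$. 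This simultaneously produces the indicator $I(\max(i,(i+1)u)<L(1)<i+1)$ and the value $X(1)$; I accept $X(1)$ exactly when the indicator equals $1$ and otherwise repeat this step with $i$ held fixed. By Lemma \ref{Lemma-Conditional-Probability}, every accepted value has precisely the law $\mathbb{P}(X(1)\in\cdot\,|\,\Xi_{\mathcal{G}}((L(1),X(1)))=i)$, so together with the correctly sampled $i$ the returned value is an unbiased draw of $X(1)$ under $\mathbb{P}$.

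\emph{Termination and main obstacle.} Every invocation of Algorithm \ref{algo:localization} halts almost surely by Corollary \ref{thm:localization}. For the rejection loop in the second stage, the per-round acceptance probability, integrated over $u$, equals up to the positive factor $i+1$ the quantity $\tilde{\mathbb{P}}(\max(i,(i+1)U)<L(1)<i+1)$, which by the identity established in the proof of Lemma \ref{Lemma-Conditional-Probability} coincides with $\mathbb{P}(\Xi_{\mathcal{G}}((L(1),X(1)))=i)$; this is strictly positive because $i$ was actually produced by the first stage. Hence the number of rejection rounds is dominated by a geometric random variable and is finite almost surely, so the whole procedure terminates with probability one. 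I expect the genuinely delicate points to be the verification that the augmented diffusions \eqref{Eqn-Likelihood-Ratio-Diff} and \eqref{Eqn-SDE-P-Prime} fall within the scope of Theorem \ref{thm:TES} even though the drift $l\,\|\mu(x)\|_2^2$ is not globally Lipschitz --- which may require invoking the non-explosion of $L$ or an additional spatial localization --- together with the measure-theoretic claim that $L(1)$ is atomless under both $\mathbb{P}$ and $\tilde{\mathbb{P}}$; the remainder is bookkeeping that glues together Theorems \ref{thm:girsanov} and \ref{thm:TES}, Corollary \ref{thm:localization}, and Lemma \ref{Lemma-Conditional-Probability}.
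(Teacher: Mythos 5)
Your proposal is correct and follows essentially the same route as the paper: localize $(L(1),X(1))$ over $\mathcal{G}$ via Corollary \ref{thm:localization} applied to \eqref{Eqn-Likelihood-Ratio-Diff}, then use Lemma \ref{Lemma-Conditional-Probability} to reduce the conditional sampling to an acceptance/rejection loop under $\tilde{\mathbb{P}}$ driven by \eqref{Eqn-SDE-P-Prime}, with almost-sure termination from the positivity of the per-round acceptance probability. The delicate points you flag (applicability of Theorem \ref{thm:TES} to the augmented systems and atomlessness of the law of $L(1)$) are indeed left implicit in the paper's own discussion, so your treatment is if anything slightly more careful.
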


\begin{algorithm}
		\caption{Exact Simulation for SDE with Constant Diffusion Coefficient}
		\label{algo:constant-diffusion-exact-simulation}
		\begin{algorithmic}[1]
			\State Apply Algorithm \ref{algo:localization} to simulate random variable $\Xi_\mathcal{G}(L(1),W(1))$ associated with SDE (\ref{Eqn-Likelihood-Ratio-Diff}), record the result as $i \gets \Xi_\mathcal{G}(L(1),W(1))$.
			\Repeat
			\State Draw a sample $u$ from $\text{Unif}(0,i+1)$.
			\State Apply Algorithm \ref{algo:localization} to sample $I(\max(i,u)<L(1)<i+1)$ using SDE \eqref{Eqn-SDE-P-Prime}. The end of $\tilde{\mathbb{P}}$-Brownian path $x\gets X(1)$ is also sampled as a by-product of TES.
			\Until{$I(\max(i,u)<L(1)<i+1)=1$.}
			\State \textbf{Output} $x$ as a sample of $X(1)$.
		\end{algorithmic}
	\end{algorithm}

\section{Exact Simulation for General SDEs\label{sec:general}}

In this section, we will develop an exact simulation algorithm for the SDE %
\eqref{eq:SDE}. We shall fix $X\left( 0\right) =x_{0}$ and the dependence of
$x_{0}$ in some objects (such as the transition density of $X\left( 1\right)
$ will be omitted).

We are still going to construct an exact simulation algorithm based on acceptance-rejection in this section. However, for SDEs with non-constant diffusion matrix, applying Girsanov's theorem no longer provides a Brownian type proposal distribution for acceptance-rejection, so we will construct an acceptance-rejection algorithm based on the density of $X(1)$.

Throughout the rest of this section, we shall assume the following
assumptions and conditions.

\begin{assumption}
\label{assumption:TES} The drift coefficient $\mu (\cdot )$ is continuously
differentiable, and the diffusion coefficient $\sigma (\cdot )$ is three
times continuously differentiable. Moreover, a strong solution to SDE %
\eqref{eq:SDE} exists.
\end{assumption}

\begin{condition}
\label{condition:density} The probability distribution of $X(1)$ is
absolutely continuous with respect to Lebesgue measure. In other words, $%
X(1) $ has a density function denoted by $p(\cdot )$ with respect to the
Lebesgue measure.
\end{condition}

\begin{condition}
\label{condition:density-lipchitz} For any relatively compact set $S$, the
density $p(\cdot )$ is Lipschitz continuous with Lipschitz constant $C_{S}$,
i.e.
\begin{equation*}
|p(x)-p(y)|\leq C_{S}|x-y|\quad \quad \forall x,y\in S.
\end{equation*}
\end{condition}

\begin{condition}
\label{condition:density-lower-bound} For any relatively compact set $S$,
there exist $\delta_S > 0$ such that
\begin{equation*}
p(x)\geq \delta_S \quad\quad \forall x\in S.
\end{equation*}
\end{condition}

As we have seen in the previous section, Assumption \ref{assumption:TES} is
the necessary condition for the applicability of the TES result introduced
in Theorem \ref{thm:TES}, which enables us to strongly approximate $X(1)$.
Condition \ref{condition:density} will eventually be used to apply the acceptance-rejection
technique using an absolutely continuous (with respect to the Lebesgue
measure) proposal distribution. Conditions \ref{condition:density-lipchitz}
and \ref{condition:density-lower-bound}, as we shall see, will allow us to
control the bound of the likelihood ratio when applying acceptance-rejection.

It is important to ensure that the constants $C_{S}$ and $\delta _{S}$ are
explicitly computable in terms of $\mu \left( \cdot \right) $ and $\sigma
\left( \cdot \right) $ only, but we should also emphasize that we are not
assuming that the density $p(\cdot )$ is known.

There are many ways in which the computability of $C_{S}$ and $\delta _{S}$
can be enforced. For instance, in Appendix \ref{Appendix-Tech} we discuss a
set of assumptions involving classical estimators of the fundamental
solutions of parabolic equations, which we review in order to compute $C_{S}$
and $\delta _{S}$ explicitly.

The standard use of the acceptance-rejection algorithm requires knowing the density $p(x)$, which seems hopeless for the general SDE problem that we study. An alternative approach is constructing a non-negative, bounded, and unbiased estimator of $p(x)$. While the density $p(x)$ is unknown, an unbiased estimator of $p(x)$, denoted by $\Lambda_N(x)$ in Section \ref{Sec-Multilevel}, can be constructed by means of a local approximation of the density. However, the unbiased estimator $\Lambda_N(x)$ may be negative, so it cannot be directly used in acceptance-rejection. To remedy this problem, in Lemma \ref{Thm-Lambda-Plus} we construct a non-negative and unbiased estimator $\Lambda^{+}_N(x)$ of $p(x)$ using a random walk and a suitable Bernoulli factory. However, the estimator $\Lambda^{+}_N(x)$ is unbounded, so we propose to sample enough information about the SDEs (the ancillary variable $N'$), such that the estimator $\Lambda^{+}_N(x)$ conditional on the sampled information is locally bounded. Consequently, conditional on the localization of $X(1)$ and the additional information $N=N'$, the estimator $\Lambda^{+}_N(x)$ is bounded and non-negative.

We now state the outline of our exact simulation algorithm. First of all, we
apply a localization technique on the countable continuous partition $%
\mathcal{G}_{\text{loc}}$ defined as
\begin{equation*}
\mathcal{G}_{\text{loc}}=\{[i_{1},i_{1}+1)\times \cdots \times \lbrack
i_{d},i_{d}+1):(i_{1},\dots ,i_{d})\in \mathbb{Z}^{d}\}.
\end{equation*}%
Since $\mathcal{G}_{\text{loc}}$ has countable components, we can enumerate $%
\mathcal{G}_{\text{loc}}$ and rewrite it in terms of $\mathcal{G}_{\text{loc}%
}=\{G_{i}:i\in \mathbb{N}\}$, where $G_{i}$ is a unit hypercube. Obviously,
Algorithm \ref{algo:localization} is applicable to $X(1)$ with respect to
the countable continuous partition $\mathcal{G}_{\text{loc}}$.

Then, we introduce an ancillary random variable $N^{\prime }$ coupled with $%
X(1)$, and simulate $(N^{\prime }|X(1)\in G_{i})$. As we shall see, the
random variable $N^{\prime }$ will play an important role after we introduce
a suitable family of random variables whose expectations converge to the
density of $X\left( 1\right) $ at a given point. In the end, we will be able
to sample $X(1)$ conditional on $N^{\prime }$ and $X(1)\in G_{i}$, using the estimator $\Lambda^{+}_N(x)$, which is bounded and non-negative for $x\in G_{i}$ and $N=N^{\prime }$.%

The following theorem provides the main contribution of this paper.

\begin{theorem}
If Assumption \ref{assumption:TES} and Condition \ref{condition:density}-\ref%
{condition:density-lower-bound} are satisfied, then there is an algorithm
for exactly simulating $X(1)$ which terminates in finite time with
probability one; see Algorithm \ref{Algo-Exact-SDE}.
\end{theorem}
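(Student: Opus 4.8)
The plan is to reduce the general case to the tools already available: TES (Theorem \ref{thm:TES}), the localization scheme (Corollary \ref{thm:localization} and Algorithm \ref{algo:localization}), and an acceptance/rejection step that exploits Conditions \ref{condition:density-lipchitz}--\ref{condition:density-lower-bound}. The delicate point, unlike the identity-diffusion section, is that we no longer have an explicit likelihood ratio with respect to a tractable proposal; instead we must use the density $p(\cdot)$ of $X(1)$ itself, which we do not know in closed form. The first step is to apply Algorithm \ref{algo:localization} to $X(1)$ with respect to the partition $\mathcal{G}_{\text{loc}}$ — this is justified because $\mathcal{G}_{\text{loc}}$ is a countable continuous partition for $X(1)$ under Condition \ref{condition:density} (boundaries of hypercubes are Lebesgue-null, hence $p$-null), and TES applies under Assumption \ref{assumption:TES}. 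This yields, in finite time a.s., the index $i$ with $X(1)\in G_i$, where $G_i$ is a fixed unit hypercube.

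Next I would condition on $\{X(1)\in G_i\}$ and construct, on the relatively compact set $S=G_i$, an unbiased family of estimators $\{D_\varepsilon(x)\}$ whose expectation equals $p(x)$ for $x\in G_i$; the role of the ancillary variable $N'$ is to index a randomized truncation (à la multilevel / Rhee--Glynn) of the sequence of TES approximations so that a single realization has conditional mean $p(x)$. Here Conditions \ref{condition:density-lipchitz} and \ref{condition:density-lower-bound} enter: Lipschitz continuity of $p$ on $G_i$ together with the strong convergence rate supplied by TES controls the bias of the $\varepsilon$-level approximations, so the randomized-truncation series converges and its partial sums can be turned into an a.s.-finite, computable estimator; the uniform lower bound $p(x)\ge \delta_S$ on $G_i$ guarantees that the resulting acceptance ratio is bounded, which is exactly what makes the final AR step implementable. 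Concretely, one proposes $X(1)$ uniformly on $G_i$ (a standard continuous proposal, legitimate by Condition \ref{condition:density}), draws $N'$ and runs TES to produce the estimator, and accepts with probability proportional to it; Condition \ref{condition:density-lower-bound} (plus an a priori upper bound on $p$ over $G_i$, obtainable the same way $C_S,\delta_S$ are) bounds the normalizing constant away from $0$ and $\infty$, so AR terminates a.s. By the standard AR identity the accepted draw has law $\mathbb{P}(X(1)\in\cdot \mid X(1)\in G_i)$, and combining with the already-correctly-sampled index $i$ gives an exact draw of $X(1)$; the full bookkeeping is recorded in Algorithm \ref{Algo-Exact-SDE}.

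The main obstacle I anticipate is the construction and verification of the unbiased density estimator indexed by $N'$: one must (i) express $p(x)$ as a telescoping series over TES refinement levels whose terms are simulatable conditionally on one another (this is where the ``simulate $Y_{\varepsilon_m}$ conditional on $Y_{\varepsilon_1},\dots,Y_{\varepsilon_{m-1}}$'' clause of Theorem \ref{thm:TES} is essential), (ii) apply a randomized truncation with a heavy-enough tail distribution for $N'$ that the debiased estimator is well-defined and a.s.\ finite, and (iii) check that the estimator is nonnegative (or can be made so) so that it can serve as an AR acceptance probability after rescaling by the bounds from Conditions \ref{condition:density-lipchitz}--\ref{condition:density-lower-bound}. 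Establishing that the expected number of refinement levels needed is finite — i.e.\ a.s.\ termination rather than finite expected running time — is the crux, and it is precisely where the paper concedes (in the introduction and Section \ref{Section_Conclusion_Discussions}) that the expected running time is in fact infinite; so the theorem claims only almost-sure finiteness, and the proof of that weaker statement follows from a.s.\ convergence of the randomized series together with a.s.\ termination of each invoked subroutine (Algorithm \ref{algo:localization} and the AR loop), each of which is guaranteed by the cited results.
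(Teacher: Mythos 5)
The architecture you describe (localize $X(1)$ to a unit hypercube $G_i$, build a nonnegative unbiased estimator of $p(x)$ indexed by an ancillary level variable $N'$, then run AR against the uniform proposal on $G_i$) is indeed the paper's architecture, and your first step is carried out exactly as in the paper. However, the core of the proof --- the construction of the unbiased density estimator --- is both misdescribed and left unresolved. You propose to telescope $p(x)$ over \emph{TES refinement levels}, i.e., over the approximations $Y_{\varepsilon_m}(1)$. That cannot work as stated: TES delivers pathwise (strong) approximation, and the piecewise-constant approximants $Y_\varepsilon(1)$ need not even possess densities, so differences across TES levels do not yield an unbiased estimator of the pointwise density value $p(x)$. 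The paper telescopes over \emph{spatial} scales instead: it sets $\overline{p_n}(x)=[V(r_n)]^{-1}\int_{B_{r_n}(x)}p$, writes $p(x)=\sum_{n}(\overline{p_{n+1}}(x)-\overline{p_n}(x))$, and estimates $\overline{p_n}(x)$ unbiasedly by $\hat p_n(x)=[V(r_n)]^{-1}I(X(1)\in B_{r_n}(x))$. The key observation is that the indicator of a ball is exactly simulatable by Corollary \ref{thm:localization} (balls have $p$-null boundaries), so TES enters only through that localization subroutine, not through any density approximation. Condition \ref{condition:density-lipchitz} is then used to bound $|\overline{p_{n+1}}(x)-\overline{p_n}(x)|\leq 2C_{G_{i,r_1}}r_n$, which with the choices $r_n\propto n^{-3}$ and $\mathbb{P}(N=n)=1/[n(n+1)]$ validates the randomized (Rhee--Glynn-type) truncation and yields the two-sided bounds of Lemma \ref{Thm-Bound}. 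Your claim that the TES strong convergence rate ``controls the bias'' of the density estimator conflates these two distinct roles.

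The two further ingredients you flag only as ``anticipated obstacles'' are load-bearing steps that the paper must, and does, supply. Nonnegativity is not something the estimator ``can be made'' to have by inspection: the paper uses the Wald/ladder construction $S_{n,\tau_n(x)}$ with $\tau_n(x)=\inf\{k\geq 1: S_{n,k}(x)\geq 0\}$, and the lower bound $\mathbb{E}[\Lambda_n(x)]\geq \delta_{G_{i,r_1}}/2$ from Condition \ref{condition:density-lower-bound} is what controls $\mathbb{E}[\tau_n(x)]$ and its second moment so that the extra factor $1/\mathbb{E}[\tau_n(x)]$ can itself be realized as a simulatable Bernoulli. Moreover, the acceptance probabilities that arise ($1/\mathbb{E}[\tau_n(x)]$ and, in the $N'$ step, a rescaling of $\mathbb{E}[\Lambda_n^{+}(x)]$) are unknown expectations accessible only as success probabilities of simulatable coins, so implementing the AR coin flips requires the Bernoulli factory of Theorem \ref{Thm-Bernoulli} (Algorithms \ref{Algo-Lambda-Plus} and \ref{Algo-N-Prime}); ``accept with probability proportional to the estimator'' is not directly executable at those points. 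Since you leave both constructions open and the mechanism you do propose in their place would fail, the proposal has a genuine gap at the heart of the argument.
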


\begin{algorithm}
		\caption{Exact Simulation for Multivariate SDE}
		\label{Algo-Exact-SDE}
		\begin{algorithmic}[1]
			\State Simulate $\Xi_{\mathcal{G}_\text{loc}}(X(1))$ applying Algorithm \ref{algo:localization}. Set $i\gets\Xi_{\mathcal{G}_\text{loc}}(X(1))$.
			\State Simulate $(N'|X(1)\in G_i)$, denote the result by $n'$.
			\State Simulate $(X(1)|N' = n',X(1)\in G_i)$, denote the result by $x$.
			\State \textbf{Output} $x$.
		\end{algorithmic}
	\end{algorithm}

The rest of this section is organized as follows. Section \ref%
{Sec-Multilevel} applies a technique borrowed from Multilevel Monte Carlo to
construct the unbiased density estimator and the ancillary random variable $N^{\prime }$. Section \ref%
{Sec-Sample-N'} explains how to sample $N^{\prime }$ using acceptance-rejection and a suitable
Bernoulli factory \cite%
{nacu2005fast,latuszynski2011simulating,huber2016nearly} conditional on
localization. Section \ref{Sec-Cond-Sample-X} demonstrates how to sample $%
X(1)$ conditional on $N^{\prime }$, once again using a suitable localization.

\subsection{A Multilevel Representation of the Density\label{Sec-Multilevel}}

In this section, we borrow an idea from Multilevel Monte Carlo \cite%
{giles2008multilevel} to construct an unbiased estimator for $p(\cdot )$,
and we also introduce the ancillary random variable $N^{\prime }$.

In order to illustrate our idea, first we need to introduce some notations.
For any $x$ in $G_{i}$, we define $\{B_{r_{n}}(x):n\geq 1\}$ as a sequence
of open balls centered at $x$, whose radii $\{r_{n}:n\geq 1\}$, form a
decreasing sequence and $r_{n}\rightarrow 0$ as $n\rightarrow \infty $.

Let $V(r)$ denote the volume of a $d$-dimensional ball with radius $r$ (i.e.
the volume of $B_{r}\left( 0\right) $). We define $\overline{p_{n}}(x)$ to
be the average density over the ball $B_{r_{n}}(x)$, namely,
\begin{equation*}
\overline{p_{n}}(x)=[V(r_{n})]^{-1}\int_{B_{r_{n}}(x)}p(x)dx.
\end{equation*}%
Let $\hat{p}_{n}(x)$ denote a nonnegative unbiased estimator for $\overline{%
p_{n}}(x)$, i.e.
\begin{equation*}
\hat{p}_{n}(x)=[V(r_{n})]^{-1}\times I(X(1)\in B_{r_{n}}(x))
\end{equation*}%
for $n\geq 1$, where $\hat{p}_{n}(x)$ is defined using the same realization $X(1)$ for all $n$ and $x$. We define $\hat{p}_{0}(x):=0$ and $\overline{p_{0}}:=0$ for
notational simplicity. It follows immediately that $\mathbb{E}[\hat{p}%
_{n}(x)]=\overline{p_{n}}(x)$ for $n\geq 0.$

The density $p(x)$ is first decomposed into an infinite telescoping sums,
\begin{equation*}
p(x)=\sum_{n=0}^{\infty }\left( \overline{p_{n+1}}(x)-\overline{p_{n}}%
(x)\right) .
\end{equation*}%
Then, we introduce a randomization technique inspired by Randomized
Multilevel Monte Carlo (see \cite{mcleish2011general,rhee2015unbiased}). The
density $p(x)$ can be decomposed as expectation of an infinite sum of
estimators, which is truncated to a finite but random level so that the
expectation is invariant. The idea is to introduce an integer-valued random
variable $N$, which is independent of everything else. Then $p(x)$ can be
expressed as
\begin{align*}
p(x)& =\sum_{n=0}^{\infty }\left( \overline{p_{n+1}}(x)-\overline{p_{n}}%
(x)\right) \\
& =\sum_{n=0}^{\infty }\sum_{k=0}^{\infty }\frac{\left( \overline{p_{n+1}}%
(x)-\overline{p_{n}}(x)\right) }{\mathbb{P}(N\geq n)}\mathbb{P}(N=k)I(n\leq
k) \\
& =\sum_{k=0}^{\infty }\sum_{n=0}^{\infty }\frac{\left( \overline{p_{n+1}}%
(x)-\overline{p_{n}}(x)\right) }{\mathbb{P}(N\geq n)}\mathbb{P}(N=k)I(n\leq
k) \\
& =\sum_{k=0}^{\infty }\sum_{n=0}^{k}\frac{\left( \overline{p_{n+1}}(x)-%
\overline{p_{n}}(x)\right) }{\mathbb{P}(N\geq n)}\mathbb{P}(N=k) \\
& =\mathbb{E}\left[ \sum_{n=0}^{N}\frac{\left( \overline{p_{n+1}}(x)-%
\overline{p_{n}}(x)\right) }{\mathbb{P}(N\geq n)}\right] ,
\end{align*}%
where the third equality follows from Fubini's theorem, which can be
justified if
\begin{equation*}
\sum_{n=0}^{\infty }\sum_{k=0}^{\infty }\frac{\left\vert \overline{p_{n+1}}%
(x)-\overline{p_{n}}(x)\right\vert }{\mathbb{P}(N\geq n)}\mathbb{P}%
(N=k)I(n\leq k)=\sum_{n=0}^{\infty }\left\vert \overline{p_{n+1}}(x)-%
\overline{p_{n}}(x)\right\vert \leq 2C_{G_{i},r_1}\sum_{n=0}^{\infty
}r_{n}<\infty .
\end{equation*}%
We will show $\sum_{n=0}^{\infty }r_{n}<\infty $ in the sequel. Moreover, by
the tower property we have
\begin{align*}
\mathbb{E}\left[ \sum_{n=0}^{N}\frac{\left( \overline{p_{n+1}}(x)-\overline{%
p_{n}}(x)\right) }{\mathbb{P}(N\geq n)}\right] & =\mathbb{E}\left[
\sum_{n=0}^{N}\frac{\mathbb{E}[\hat{p}_{n+1}(x)-\hat{p}_{n}(x)|N]}{\mathbb{P}%
(N\geq n)}\right] \\
& =\mathbb{E}\left[ \mathbb{E}\left[ \left.\sum_{n=0}^{N} \frac{\hat{p}%
_{n+1}(x)-{\hat{p}_{n}}(x)}{\mathbb{P}(N\geq n)}\right\vert N\right] \right]
\\
& =\mathbb{E}\left[ \sum_{n=0}^{N}\frac{\hat{p}_{n+1}(x)-{\hat{p}_{n}}(x)}{%
\mathbb{P}(N\geq n)}\right] .
\end{align*}%
Therefore, if we define
\begin{equation*}
\Lambda _{n}(x)=\sum_{k=0}^{n}\frac{\hat{p}_{k+1}(x)-\hat{p}_{k}(x)}{\mathbb{%
P}(N\geq k)}\quad \mbox{for}\quad n\geq 0,
\end{equation*}%
it follows easily that%
\begin{equation}
p(x)=\mathbb{E}\left[ \Lambda _{N}(x)\right] .  \label{Eqn-Lambda-n}
\end{equation}%

We now are interested in obtaining bounds for $\Lambda _{n}(x)$ and its
expectation for $x\in G_{i}$. To this end we first define $G_{i,r_{1}}$ as $%
r_{1}$-neighborhood of set $G_{i}$, which consists of all points that at
a distance less than $r_{1}$ from $G_{i}$, i.e.
\begin{equation*}
G_{i,r_{1}}=\bigcup_{x\in G_{i}}B_{r_{1}}(x).
\end{equation*}%
It is not hard to observe that $G_{i,{r_{i}}}$ is a relatively compact set, to
which Conditions \ref{condition:density}-\ref{condition:density-lower-bound}
are applicable. In the following lemma, we will demonstrate that under such
conditions, one can judiciously pick the distribution of $N$ and the radii $%
\{r_{n}:n\geq 1\}$ in order to establish explicit bounds for $\Lambda
_{n}(x) $ and $\mathbb{E}[\Lambda _{n}(x)]$, respectively.

\begin{lemma}
\label{Thm-Bound} Suppose that $x\in G_i$ and Condition \ref%
{condition:density}-\ref{condition:density-lower-bound} are satisfied. If we
pick
\begin{equation*}
r_n = (3\delta_{G_{i,r_1}})/(2\pi^2n^3C_{G_{i,r_1}}) \quad\mbox{and}\quad
\mathbb{P}(N = n) = 1/[n(n+1)].
\end{equation*}
for $n\geq 1$, then we have
\begin{equation}
\delta_{G_{i,r_1}}/2\leq \mathbb{E}[\Lambda_{n}(x)] \leq [V(r_{1})]^{-1} +
\delta_{G_{i,r_1}}/2.
\label{eq-expectation-bound}
\end{equation}
and
\begin{equation}
|\Lambda_n(x)|\leq[V(r_1)]^{-1} +
\sum_{k=1}^n\left(k[V(r_{k+1})]^{-1}+k[V(r_k)]^{-1}\right)=:m_{n}.
\end{equation}
\end{lemma}

\begin{proof}
Let us construct the lower bound of $\mathbb{E}[\Lambda_{n}(x)]$ first. By
triangle inequality,
\begin{equation*}
\mathbb{E}[\Lambda_n(x)]\geq \mathbb{E}[\Lambda_{0}(x)] - \sum_{k=1}^n%
\mathbb{E}\left|\Lambda_{k}(x)-\Lambda_{k-1}(x)\right|.
\end{equation*}
On the one hand, from the definition of $\Lambda_0(x)$ and Condition \ref%
{condition:density-lower-bound}, we can conclude that
\begin{equation*}
\mathbb{E}[\Lambda_{0}(x)] = \mathbb{E}[\hat{p}_{1}(x)] = \overline{p_1} (x)
\geq \delta_{G_{i,r_1}}.
\end{equation*}
On the other hand, using the triangle inequality
\begin{align*}
	\mathbb{E}\left|\Lambda_{k}(x)-\Lambda_{k-1}(x)\right| &= \mathbb{E}
	\left\vert\frac{\hat{p}_{k+1}(x)-\hat{p}_{k}(x)}{\mathbb{%
			P}(N\geq k)}\right\vert\\
	&\leq\left\vert\frac{\mathbb{E}\hat{p}_{k+1}(x)-\mathbb{E}\hat{p}_{k}(x)}{\mathbb{%
			P}(N\geq k)}\right\vert\\
	&=(\mathbb{P}(N\geq k))^{-1} |\overline{p_{k+1}}(x) - \overline{p_{k}}(x)|.
\end{align*}
Then, from Condition \ref{condition:density-lipchitz} we have
\begin{equation}
|p(x) -p(y)|\leq C_{G_{i,r_1}}|x-y|\quad \mbox{for}\quad x,y\in C_{G_{i,r_1}}.
\label{eq-lip}
\end{equation}
Recall that $\overline{p_{k}}(x)$ is the average density over the ball $B_{r_{k}}(x)$ and $B_{r_{k}}(x)\subseteq B_{r_{1}}(x)\subseteq G_{i,r_1}$ for $x\in G_i$. It then follows form the \eqref{eq-lip} that
\begin{equation*}
	|\overline{p_{k+1}}(x) - \overline{p_{k}}(x)|\leq C_{G_{i,r_1}} \mathrm{diam}\big(B_{r_1}(x)\big) = 2C_{G_{i,r_1}}r_1.
\end{equation*}
Consequently,
\begin{equation*}
\sum_{k=1}^n\mathbb{E}\left|\Lambda_{k}(x)-\Lambda_{k-1}(x)\right| \leq
\sum_{k=1}^n 2C_{G_{i},r_{1}}(\mathbb{P}(N\geq k))^{-1}r_k \leq
\delta_{G_{i,r_1}}/2.
\end{equation*}
Combining the above inequality with $\mathbb{E}[\Lambda_{0}(x)]
\geq \delta_{G_{i,r_1}}$ yields
\begin{equation*}
\mathbb{E}[\Lambda_n(x)]\geq \delta_{G_{i,r_1}}/2.
\end{equation*}
Similarly, observing that $\mathbb{E}[\Lambda_{0}(x)] = \mathbb{E}[\hat{p}%
_1(x)] = [V(r_{1})]^{-1}\times \mathbb{P}(X(1)\in B_{r_{1}}(x))\leq [V(r_{1})]^{-1}$, for the upper bound we have
\begin{equation*}
\mathbb{E}[\Lambda_{n}(x)]\leq \mathbb{E}[\Lambda_{0}(x)] + \sum_{k=1}^n%
\mathbb{E}\left|\Lambda_{k}(x)-\Lambda_{k-1}(x)\right| \leq
[V(r_{1})]^{-1}+\delta_{G_{i,r_1}}/2.
\end{equation*}
We can also derive an upper bound of $|\Lambda_n(x)|$:
\begin{align*}
|\Lambda_n(x)|&\leq \sum_{k=0}^n(\mathbb{P}(N\geq k))^{-1}\left|\hat{p}%
_{k+1}(x)-\hat{p}_{k}(x)\right| \\
&\leq [V(r_1)]^{-1} +
\sum_{k=1}^n\left(k[V(r_{k+1})]^{-1}+k[V(r_k)]^{-1}\right)=:m_{n}<\infty,
\end{align*}
which ends the proof.
\end{proof}

In the rest of this paper, we will adopt the value of $r_{n}$ and the
distribution of $N$ in Lemma \ref{Thm-Bound}.

Even though we have constructed an unbiased estimator $\Lambda _{N}(x)$ for $%
p(x)$, acceptance-rejection is not applicable because $\Lambda _{N}(x)$ may be negative and
unbounded. In order to apply acceptance-rejection, we need a nonnegative unbiased estimator
for $p(x)$, which will be constructed in Lemma \ref{Thm-Lambda-Plus}. The
idea of such construction borrows an idea from \cite{fearnhead2010random}.
Let $\{\Lambda _{n,k}(x):k\geq 1\}$ be i.i.d. copies of $\Lambda _{n}(x)$.
We then define
\begin{equation*}
S_{n,k}(x):=\Lambda _{n,1}(x)+\dots +\Lambda _{n,k}(x).
\end{equation*}%
and
\begin{equation*}
\tau _{n}(x):=\inf \{k\geq 1:S_{n,k}(x)\geq 0\}.
\end{equation*}%
By Wald's first equation,
\begin{equation}  \label{Eqn-Wald}
\mathbb{E}[\Lambda _{n}(x)]=\mathbb{E}\left[ S_{n,\tau _{n}(x)}(x)\right] /%
\mathbb{E}[\tau _{n}(x)].
\end{equation}
Note that $S_{n,\tau _{n}(x)}(x)\geq 0$, but now we have an additional
contribution $1/\mathbb{E}[\tau _{n}(x)]$, which can be interpreted as a
probability. In order to sample a Bernoulli random variable with such probability, we will
need the following result which we refer to as the Bernoulli Factory.

\begin{theorem}[Bernoulli factory \protect\cite{nacu2005fast,huber2016nearly}%
]
\label{Thm-Bernoulli} Assume that $\epsilon \in (0,1/2]$ and $\alpha >0$ are
two known constants and that we have an oracle that outputs i.i.d.
Bernoullies with parameter $p\in (0,\left( 1-\epsilon \right) /\alpha ]$.
Then, there is an algorithm which takes the output of the oracle and
produces a Bernoulli random variable with parameter $f(p)=\min \left( \alpha
p,1-\epsilon \right) =\alpha p$. Moreover, if $\bar{N}\left( \alpha
,\epsilon \right) $ is the number of Bernoulli$\left( p\right) $ random
variables required to output Bernoulli$\left( f\left( p\right) \right) $
then $.004\cdot \alpha /\epsilon \leq \mathbb{E}\left( \bar{N}\left( \alpha
,\epsilon \right) \right) \leq 10\cdot \alpha /\epsilon $.
\end{theorem}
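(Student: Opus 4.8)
The plan is to prove the theorem in two stages. The first is to show that a function of the form $f(p)=\min(\alpha p,1-\epsilon)$ admits \emph{some} Bernoulli factory, i.e.\ an algorithm that consumes i.i.d.\ $\mathrm{Ber}(p)$ inputs and outputs $\mathrm{Ber}(f(p))$ while halting almost surely; the second is to pick a purpose-built factory whose expected number of oracle calls has the optimal order $\alpha/\epsilon$, thereby extracting the explicit constants $.004$ and $10$. I would present stage one via the polynomial-envelope method of Nacu--Peres and stage two via Huber's construction tailored to linear functions, since the statement is essentially a packaging of those two external results. Note first that on the promised range $p\in(0,(1-\epsilon)/\alpha]$ one has $\alpha p\le 1-\epsilon$, so that $f(p)=\alpha p$ there and the factory indeed returns $\mathrm{Ber}(\alpha p)$.

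For stage one I would use that $f$ is concave, Lipschitz, $f(0)=0$, and $f\le 1-\epsilon<1$. Let $B_nf(p)=\mathbb{E}[f(K_n/n)]$ with $K_n\sim\mathrm{Bin}(n,p)$ be the degree-$n$ Bernstein polynomial of $f$. Jensen's inequality and concavity give $B_nf\le f$, so $g_n:=B_nf$ are valid lower envelopes; they converge to $f$ (Lipschitz gives $\|B_nf-f\|_\infty=O(n^{-1/2})$), and by concavity this convergence is monotone increasing in $n$. Taking $h_n:=B_nf+\|B_nf-f\|_\infty$ gives upper envelopes with $h_n\ge f$, $h_n\downarrow f$, and $h_n\le 1$ once $n$ is large (here $f\le 1-\epsilon$ is used). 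The degree-elevation identity together with concavity is what produces the coefficient-consistency (``nestedness'') condition Nacu--Peres require between the Bernstein coefficients at successive levels; one may pass to a sparse subsequence of degrees, say powers of $2$, to make this clean. The algorithm then flips the oracle $n$ times (re-using earlier flips as $n$ grows), sets $K_n$ to be the number of heads among them, and, with $a(n,k)$ the Bernstein coefficients of $g_n$ and $b(n,k)$ those of $h_n$: outputs $1$ with probability $a(n,K_n)$, outputs $0$ with probability $1-b(n,K_n)$, and otherwise advances to the next level. Consistency makes the ``otherwise'' branch a legitimate coupling across levels, $h_n-g_n\to 0$ forces almost-sure termination, and a telescoping computation identifies the overall probability of outputting $1$ with $\lim_n g_n(p)=f(p)$.

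The generic algorithm of stage one is too slow for the quantitative claim — an $O(n^{-1/2})$ envelope gap yields an expected flip count far larger than $\alpha/\epsilon$ — so for stage two I would invoke Huber's factory for linear $f$. Roughly, it drives a random walk with the oracle flips and places two absorbing barriers so that absorption at the ``$1$'' barrier occurs with probability exactly $\alpha p$, with $\epsilon$ governing the drift/geometry near the barrier; exactness follows from an optional-stopping identity. The matching lower bound $\mathbb{E}(\bar N)\gtrsim\alpha/\epsilon$ is essentially information-theoretic: distinguishing $\alpha p=1-\epsilon$ from $\alpha p=1$ forces one to resolve $p$ to precision $\epsilon/\alpha$, hence $\Omega(\alpha/\epsilon)$ samples. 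Bounding the expected absorption time of the walk by a geometric-type series is where the explicit numbers $.004$ and $10$ come from.

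The step I expect to be the real obstacle is the quantitative half — squeezing $\mathbb{E}(\bar N)$ between $.004\,\alpha/\epsilon$ and $10\,\alpha/\epsilon$ — rather than mere existence. Via envelopes this would demand sharp two-sided control of how fast the Bernstein envelopes of $\min(\alpha p,1-\epsilon)$ close up near the kink at $p=(1-\epsilon)/\alpha$, which is delicate since $f$ is only Lipschitz there; Huber's random-walk scheme avoids that by design, but then the entire burden shifts onto a careful hitting-time estimate, and that estimate is exactly what manufactures the constants. A secondary point requiring care is to use the hypothesis $p\le(1-\epsilon)/\alpha$ precisely where it is needed, so that the algorithm never encounters the clipped portion of $\min(\alpha p,1-\epsilon)$ and the output is genuinely $\mathrm{Ber}(\alpha p)$.
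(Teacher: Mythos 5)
The paper does not prove this theorem at all: it is imported verbatim as an external result, with the existence part attributed to Nacu and Peres \cite{nacu2005fast} and the quantitative bounds (the constants $.004$ and $10$) to Huber \cite{huber2016nearly}. Your two-stage reconstruction correctly mirrors how the literature actually divides the work, and your observation that the hypothesis $p\le(1-\epsilon)/\alpha$ keeps the algorithm on the linear branch of $\min(\alpha p,1-\epsilon)$ is exactly the role that hypothesis plays in the paper's applications. So as a map of where the proof lives, your outline is right.

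Two points in your sketch would not survive being written out. First, in stage one you assert that $h_n:=B_nf+\|B_nf-f\|_\infty$ together with $g_n:=B_nf$ satisfies the Nacu--Peres consistency (nestedness) conditions ``by the degree-elevation identity together with concavity.'' Those conditions are inequalities between the Bernstein coefficients of $g_n,h_n$ at successive degrees after degree elevation, and they are not automatic for this choice of $h_n$; Nacu--Peres have to work (their Proposition 3 and the subsequent construction for the elbow function) precisely to arrange them, and the kink of $f$ at $p=(1-\epsilon)/\alpha$ is where the difficulty sits. Second, your justification of the lower bound $\mathbb{E}(\bar N)\ge .004\,\alpha/\epsilon$ --- ``distinguishing $\alpha p=1-\epsilon$ from $\alpha p=1$ forces one to resolve $p$ to precision $\epsilon/\alpha$, hence $\Omega(\alpha/\epsilon)$ samples'' --- is not correct as stated: a two-point test resolving $p$ to additive precision $\delta=\epsilon/\alpha$ at $p\approx 1/\alpha$ costs on the order of $p(1-p)/\delta^2\approx\alpha/\epsilon^2$ samples, which is the wrong order, and in any case the factory is not required to identify $p$. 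The actual lower bound in \cite{huber2016nearly} is a different argument (a likelihood-ratio/martingale analysis of any valid factory near the kink), and since the explicit constant $.004$ is part of the claim being proved, this step cannot be left as a heuristic. As long as both stages ultimately defer to the cited papers, your write-up is a correct attribution rather than a proof; to make it self-contained you would need to verify the coefficient consistency for your specific envelopes and reproduce Huber's hitting-time and lower-bound computations.
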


We now can explain how to construct $\Lambda _{n}^{+}(x)\geq 0$ such that $%
\mathbb{E}[\Lambda _{n}^{+}(x)]=\mathbb{E}[\Lambda _{n}(x)]$.

\begin{lemma}
\label{Thm-Lambda-Plus} There exists a family of random variables $\{\Lambda
_{n}^{+}(x):n\in \mathbb{N},x\in G_{i}\}$, such that the following
properties hold:

\begin{enumerate}
\item $0\leq \Lambda _{n}^{+}(x)\leq m_{n}$.

\item $\mathbb{E}[\Lambda _{n}^{+}(x)]=\mathbb{E}[\Lambda _{n}(x)]$.

\item Given $n$ and $x$, there is an algorithm for simulating $\Lambda
_{n}^{+}(x)$.
\end{enumerate}
\end{lemma}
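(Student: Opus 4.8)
The plan is to build $\Lambda_n^+(x)$ out of the stopped sum $S_{n,\tau_n(x)}(x)$ and an independent Bernoulli whose parameter is $1/\mathbb{E}[\tau_n(x)]$, exactly as the discussion preceding the lemma suggests. Concretely, I would set $\Lambda_n^+(x) := m_n \cdot B \cdot I(\text{the Bernoulli succeeds})$, or more precisely multiply the nonnegative quantity $S_{n,\tau_n(x)}(x)/m_n \in [0,1]$ (which is bounded because $|\Lambda_{n,j}(x)|\le m_n$ forces $0 \le S_{n,\tau_n(x)}(x) \le m_n$: the sum first becomes nonnegative at step $\tau_n$, and the last increment $\Lambda_{n,\tau_n}(x) \ge -m_n$ wait — I should be careful here; see below) by a Bernoulli of parameter $1/\mathbb{E}[\tau_n(x)]$, and then the identity \eqref{Eqn-Wald} gives $\mathbb{E}[\Lambda_n^+(x)] = \mathbb{E}[S_{n,\tau_n(x)}(x)]\cdot \tfrac{1}{\mathbb{E}[\tau_n(x)]} = \mathbb{E}[\Lambda_n(x)]$, which is property (2). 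Property (1) follows once we verify the bound $0 \le S_{n,\tau_n(x)}(x) \le m_n$.

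First I would establish the boundedness of the stopped sum. By definition $S_{n,\tau_n(x)-1}(x) < 0$ (or $\tau_n = 1$), and $S_{n,\tau_n(x)}(x) = S_{n,\tau_n(x)-1}(x) + \Lambda_{n,\tau_n}(x) \ge 0$; combined with $\Lambda_{n,\tau_n}(x) \le m_n$ from Lemma \ref{Thm-Bound}, we get $0 \le S_{n,\tau_n(x)}(x) \le m_n$, so $q := S_{n,\tau_n(x)}(x)/m_n \in [0,1]$. I also need $\tau_n(x)$ to be almost surely finite with $\mathbb{E}[\tau_n(x)] < \infty$ for Wald's equation to be valid and for $1/\mathbb{E}[\tau_n(x)]$ to be a genuine probability in $(0,1]$: since $\mathbb{E}[\Lambda_n(x)] \ge \delta_{G_{i,r_1}}/2 > 0$ by Lemma \ref{Thm-Bound}, the random walk $S_{n,k}(x)$ has positive drift, so $\tau_n(x) < \infty$ a.s. and $\mathbb{E}[\tau_n(x)] < \infty$ (a standard renewal/Wald argument), hence $1/\mathbb{E}[\tau_n(x)] \in (0,1]$ and \eqref{Eqn-Wald} holds.

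Next I would address the simulability (property (3)). Sampling $S_{n,\tau_n(x)}(x)$ is straightforward: generate i.i.d. copies $\Lambda_{n,j}(x)$ — each of which requires sampling the indicators $I(X(1) \in B_{r_k}(x))$ for $k = 1,\dots,n+1$, which is exactly the localization primitive of Corollary \ref{thm:localization} since $\mathbb{P}(X(1) \in \partial B_{r_k}(x)) = 0$ under Condition \ref{condition:density} — and stop at the first $k$ with $S_{n,k}(x) \ge 0$. The real work is producing a Bernoulli with parameter $1/\mathbb{E}[\tau_n(x)]$, whose value is not known in closed form; this is where Theorem \ref{Thm-Bernoulli} enters. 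I would exhibit an oracle that outputs i.i.d. Bernoullies with parameter $p = c/\mathbb{E}[\tau_n(x)]$ for a known constant $c$ — e.g. run an independent copy of the $\tau_n$-experiment, and on a block of geometrically many trials declare success iff $\tau_n = 1$ on the first, giving $p$ proportional to $\mathbb{P}(\tau_n(x) = 1) = \mathbb{P}(\Lambda_{n,1}(x) \ge 0)$, or more robustly use the standard "even-odd" trick for simulating $1/\mathbb{E}[\tau]$ from i.i.d. copies of $\tau$ — then apply the Bernoulli factory with multiplier $\alpha$ and $\epsilon$ chosen from the explicit a priori bounds on $\mathbb{E}[\tau_n(x)]$ that follow from Lemma \ref{Thm-Bound}'s two-sided bounds on $\mathbb{E}[\Lambda_n(x)]$ together with $|\Lambda_n(x)| \le m_n$ (these give $\mathbb{E}[\tau_n(x)] \le m_n/\mathbb{E}[\Lambda_n(x)] \cdot (\text{const})$ and $\mathbb{E}[\tau_n(x)] \ge 1$).

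The main obstacle is the Bernoulli-factory step: one must (i) construct an implementable i.i.d. Bernoulli oracle whose parameter is a known constant multiple of $1/\mathbb{E}[\tau_n(x)]$ using only finite-time experiments — the "even–odd sieve" of von Neumann/Rhee–Glynn type is the natural tool, expressing $1/\mathbb{E}[\tau]$ via $\mathbb{E}[\tau] = \sum_k \mathbb{P}(\tau \ge k)$ and a competing-clocks construction — and (ii) certify the hypotheses of Theorem \ref{Thm-Bernoulli}, namely $p \le (1-\epsilon)/\alpha$ with $\epsilon, \alpha$ computable from $\mu(\cdot), \sigma(\cdot)$ alone via the explicit constants $\delta_{G_{i,r_1}}, C_{G_{i,r_1}}, m_n$. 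Everything else is bookkeeping: assembling $\Lambda_n^+(x) := m_n \cdot q \cdot \mathrm{Bernoulli}(1/\mathbb{E}[\tau_n(x)])$ and reading off (1)–(3).
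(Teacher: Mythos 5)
Your construction of $\Lambda_n^+(x)$, the bound $0\le S_{n,\tau_n(x)}(x)\le \Lambda_{n,\tau_n(x)}(x)\le m_n$, and the use of Wald's first equation for property (2) coincide exactly with the paper's argument, and your observation that $\mathbb{E}[\tau_n(x)]<\infty$ follows from the positive drift $\mathbb{E}[\Lambda_n(x)]\ge \delta_{G_{i,r_1}}/2$ is correct (the paper makes this quantitative: $\mathbb{E}[\tau_n(x)]\le 2\delta_{G_{i,r_1}}^{-1}m_n$). Properties (1) and (2) are therefore fine.

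The genuine gap is in property (3): you correctly isolate the crux --- producing a Bernoulli with parameter $1/\mathbb{E}[\tau_n(x)]$ --- but you do not actually close it. The one concrete oracle you propose, ``declare success iff $\tau_n=1$ on the first trial,'' yields a coin with parameter proportional to $\mathbb{P}(\tau_n(x)=1)$, which is \emph{not} a known constant multiple of $1/\mathbb{E}[\tau_n(x)]$, so Theorem \ref{Thm-Bernoulli} (which only rescales an unknown $p$ by a known linear factor) cannot repair it. The ``even--odd sieve'' alternative is named but never specified. The paper's mechanism, which you would need to supply, is: introduce $T_n(x)$ with $\mathbb{P}(T_n(x)=k)=\mathbb{P}(\tau_n(x)\ge k)/\mathbb{E}[\tau_n(x)]$, so that $I(T_n(x)=1)$ is exactly the desired coin (since $\mathbb{P}(\tau_n(x)\ge 1)=1$); then sample $T_n(x)$ by acceptance/rejection against the proposal $\mathbb{P}(T'=k)=6/(\pi^2k^2)$. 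Two further ingredients are needed to make that AR step legitimate, and neither appears in your sketch: a computable second-moment bound $\mathbb{E}[\tau_n(x)^2]\le m_{\tau,n}$, obtained from Wald's \emph{second} equation, which via Chebyshev's inequality bounds the likelihood ratio $k^2\,\mathbb{P}(\tau_n(x)\ge k)/\mathbb{E}[\tau_n(x)]$ uniformly in $k$; and the realization of the acceptance coin, whose parameter is a known multiple of the unknown $\mathbb{P}(\tau_n(x)\ge k)$, by feeding i.i.d.\ copies of the simulable indicator $I(\tau_n(x)\ge k)$ into the Bernoulli factory with $\epsilon=1/2$. Without these, the ``oracle with parameter $c/\mathbb{E}[\tau_n(x)]$'' you invoke remains unconstructed.
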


\begin{proof}
Let $\bar{\Gamma}_{n}(x)$ be a Bernoulli random variable with parameter $%
\left( \mathbb{E}[\tau _{n}(x)]\right) ^{-1}$, and independent of everything
else. It follows that
\begin{equation*}
\mathbb{E}[\Lambda _{n}(x)]=\mathbb{E}\left[ \bar{\Gamma}_{n}(x)S_{n,\tau
_{n}(x)}(x)\right] .
\end{equation*}%
We write $\Lambda _{n}^{+}(x):=\bar{\Gamma}_{n}(x)S_{n,\tau _{n}(x)}(x)$.
Property 1 follows from the facts that $0\leq S_{n,\tau _{n}(x)}(x)\leq
\Lambda _{n,\tau _{n}(x)}(x)\leq m_{n}$, and that $0\leq \bar{\Gamma}%
_{n}(x)\leq 1$. Property 2 is justified directly by equation \eqref{Eqn-Wald}%
. To show that $\Lambda _{n}^{+}(x)$ can be simulated, we just need to
provide an algorithm for simulating $\bar{\Gamma}_{n}(x)$.

Recall that $\mathbb{E}[\Lambda _{n}(x)]\geq \delta _{G_{i,r_{1}}}/2$, we
have
\begin{equation*}
\mathbb{E}[\tau _{n}(x)]= \frac{\mathbb{E}[S_{n,\tau _{n}(x)}(x)]}{%
\mathbb{E}[\Lambda _{n}(x)]}\leq 2\delta _{G_{i,r_{1}}}^{-1}m_{n}.
\end{equation*}%
Consider Wald's second equation
\begin{equation*}
\mathbb{E}\left[ \left( S_{n,\tau _{n}(x)}(x)-\mathbb{E}(\Lambda
_{n}(x))\tau _{n}(x)\right) ^{2}\right] =\text{Var}(\Lambda _{n}(x))\mathbb{E%
}[\tau _{n}(x)],
\end{equation*}%
which implies
\begin{align}
\mathbb{E}\left[ (\tau _{n}(x))^{2}\right] & \leq \frac{2\mathbb{E}\left[
\tau _{n}(x)S_{n,\tau _{n}(x)}(x)\right]\mathbb{E}\left[ \Lambda _{n}(x)\right]+\text{Var}(\Lambda _{n}(x))\mathbb{E}\left[\tau
_{n}(x)\right] }{\mathbb{E}\left[ \Lambda _{n}(x)\right]^{2}
}\notag \\
& \leq \frac{(2m_{n}^{2}+m_{n}^{2})\mathbb{E}(\tau _{n}(x))}{(\delta
_{G_{i},r_{1}}/2)^{2}} \notag\\
& \leq \frac{3m_{n}^{3}}{(\delta _{G_{i},r_{1}}/2)^{3}}=:m_{\tau
,n}\label{eq-m-tau}.
\end{align}%
Now we shall proceed to simulate the random variable $\bar{\Gamma}_{n}(x)$.
Consider a random variable $T_{n}(x)$ with distribution
\begin{equation*}
\mathbb{P}(T_{n}(x)=k)=\mathbb{P}(\tau _{n}(x)\geq k)/\mathbb{E}[\tau
_{n}(x)]\quad \text{for}\quad k\geq 1.
\end{equation*}%
Since $I(T_{n}(x)=1)$ is the desired Bernoulli random variable with
parameter $(\mathbb{E}[\tau _{n}(x)])^{-1}$, it then suffices to simulate $%
T_{n}(x)$. Towards this end, we apply acceptance-rejection again using another random variable
$T^{\prime }$ as proposal, whose distribution is
\begin{equation*}
\mathbb{P}(T^{\prime }=k)=\frac{6}{\pi ^{2}k^{2}}\quad \text{for}\quad k\geq
1.
\end{equation*}

Since $\tau_n(x)$ is nonnegative, Markov's inequality asserts that
\begin{align}
	\mathbb{P}(\tau _{n}(x)\geq k) = \mathbb{P}\left(\left(\tau _{n}(x)\right)^2\geq k^2\right)\leq k^{-2}\mathbb{E}[\tau _{n}(x)^{2}]\leq k^{-2}m_{\tau,n},
	\label{eq-tau-prob-bound}
\end{align}
where the last inequality follows from \eqref{eq-m-tau}. Also, from the definition of $\tau_{n}(x)$ we know that $\mathbb{E}[\tau_n(x)]\geq 1$. Consequently,
the likelihood ratio between $T_{n}(x)$ and $T^{\prime }$ is given by
\begin{equation*}
\frac{\mathbb{P}(T_{n}(x)=k)}{\mathbb{P}(T^{\prime }=k)}=\frac{\pi ^{2}k^{2}%
\mathbb{P}(\tau _{n}(x)\geq k)}{6\mathbb{E}[\tau _{n}(x)]}\leq  \frac{\pi
^{2}}{6}m_{\tau ,n}.
\end{equation*}
From the above inequality we see
that the likelihood ratio is bounded, so the acceptance-rejection procedure is applicable.

Conditional on the proposal $T' = k$, we then introduce a new Bernoulli random variable $\tilde{\Gamma}_{n,k}(x)$ to
decide whether or not the proposal is accepted as $T_n(x)$. The distribution of $\tilde{\Gamma%
}_{n,k}(x)$ is defined as
\begin{equation*}
\mathbb{P}(\tilde{\Gamma}_{n,k}(x)=1)=1-\mathbb{P}(\tilde{\Gamma}_{n,k}(x)=0)=%
\frac{k^{2}}{2m_{\tau ,n}}
\mathbb{P}(\tau _{n}(x)\geq k).
\end{equation*}%
Hence it follows from \eqref{eq-tau-prob-bound} that
\begin{equation}  \label{Eqn-Gamma-Tilde-Upper-Bound}
\mathbb{P}(\tilde{\Gamma}_{n,k}(x)=1)\leq 1/2.
\end{equation}
Observe that the indicator $I(\tau _{n}(x)\geq k)$ is simulable,
but its distribution is not explicitly accessible, so it is natural to
sample $\tilde{\Gamma}_{n,k}(x)$ via Bernoulli factory introduced in Theorem %
\ref{Thm-Bernoulli}. Due to \eqref{Eqn-Gamma-Tilde-Upper-Bound}, the
function $f(\cdot )$ involved in Bernoulli factory is a linear function as
following
\begin{equation*}
f(p)=\min\left(\frac{k^{2}}{2m_{\tau ,n}}p,\frac{1}{2}\right) =\frac{k^{2}}{2m_{\tau ,n}}p.
\end{equation*}%
We summarize the procedure of simulating $\Lambda _{n}^{+}(x)$ in Algorithm %
\ref{Algo-Lambda-Plus}.
\end{proof}

\begin{algorithm}
		\caption{Simulation of $\Lambda_{n}^{+}(x)$.}
		\begin{algorithmic}[1]
			\Repeat
			\State Sample random variable $T'$, set $k\gets T'$.
			\Repeat
			\State Sample an independent copy of $I(\tau_{n}(x)\geq k)$
			as an input of Bernoulli factory associated with function
			\[
			f(p)=\min\left(\frac{k^{2}}{2m_{\tau ,n}}p,\frac{1}{2}\right)
			\]
			\Until{Bernoulli factory produces an output  $\gamma$.}
			\Until{$\gamma= 1$.}
			\If{k=1}
			\State Sample $S_{n,\tau_n(x)}(x)$ and \textbf{output} the result.
			\Else
			\State \textbf{Output} $0$.
			\EndIf
		\end{algorithmic}
		\label{Algo-Lambda-Plus}
	\end{algorithm}

We now introduce an ancillary random variable $N^{\prime }$ coupled with $%
X(1)$ in the following way,
\begin{equation}  \label{Eqn-Dist-Joint-N-X}
\mathbb{P}(N^{\prime }= n|X(1) \in dx) \propto \mathbb{P}(N=n)\times \mathbb{%
E}[\Lambda^{+}_n(x)].
\end{equation}

Assuming $(N^{\prime }|X(1)\in G_{i})$ can be simulated, $(X(1)|N^{\prime
},X(1)\in G_{i})$ can be easily simulated by acceptance-rejection as well due to the
convenient density representation given by \eqref{Eqn-X-Conditional}. The
algorithm for sampling $(N^{\prime }|X(1)\in G_{i})$ will be explained in
the next section. 

\subsection{Conditional Sampling of $N^{\prime }$\label{Sec-Sample-N'}}

In this section we will focus on the procedure for simulating $N^{\prime }$
conditional on $X(1)\in G_i$.

First we derive from equation \eqref{Eqn-Dist-Joint-N-X} the probability
mass function of $(N^{\prime }|X(1)\in G_i)$, namely
\begin{equation*}
\mathbb{P}(N^{\prime }= n|X(1)\in G_i) = \frac{\mathbb{P}(N=n)}{\mathbb{P}%
(X(1)\in G_i)} \times\int_{G_1} \mathbb{E}[\Lambda^+_n(x)]dx.
\end{equation*}
Due to the upper bound of $\mathbb{E}[\Lambda_n]$ given by Lemma \ref%
{Thm-Bound}, we have the following inequality
\begin{equation*}
\mathbb{P}(N^{\prime }= n|X(1)\in G_i) \leq \frac{\mathbb{P}(N=n)}{\mathbb{P}%
(X(1)\in G_i)}\times \left([V(r_1)]^{-1} + \delta_{G_{i,r_1}}/2\right).
\end{equation*}

Simulation of $(N^{\prime }|X(1)\in G_{i})$ can be achieved by acceptance-rejection. Consider
a Bernoulli random variable $\Gamma _{n}(x)$ defined as
\begin{equation}  \label{Eqn-Gamma-Upper-Bound}
\mathbb{P}(\Gamma _{n}(x)=1)=1-\mathbb{P}(\Gamma _{n}(x)=0)=\frac{1}{2}%
\left( [V(r_1)]^{-1}+\delta _{G_{i,r_{1}}}/2\right) ^{-1}\mathbb{E}[\Lambda
_{n}^{+}(x)]\leq \frac{1}{2}.
\end{equation}%
Then the outline of the acceptance-rejection algorithm for simulating $N^{\prime }$ would be:

\begin{description}
\item[Step 1] Sample $n$ from random variable $N$.

\item[Step 2] Sample $x$ from uniform distribution $U_{G_i}\sim\text{Unif}%
(G_i)$.

\item[Step 3] Simulate $\Gamma_{n}(x)$. If $\Gamma_{n}(x) = 0$, go to Step
1. Otherwise accept $n$ as a sample of $N^{\prime }$.
\end{description}

The only difficult step in the above procedure is Step 3, namely, sample $%
\Gamma _{n}(x)$, which we will discuss now.

Lemma \ref{Thm-Bound} implies that $0\leq \Lambda _{n}^{+}(x)\leq m_{n}$.
Let $U\sim \text{Unif}(0,m_{n})$, which is independent of everything else,
then $I(U\leq \Lambda _{n}^{+}(x))$ is a Bernoulli random variable with
parameter $(m_{n})^{-1}\mathbb{E}[\Lambda _{n}^{+}(x)]$. Due to %
\eqref{Eqn-Gamma-Upper-Bound}, Bernoulli factory given in Theorem \ref%
{Thm-Bernoulli} is applicable to simulate $\Gamma _{n}(x)$, using $%
I(U\leq\Lambda _{n}^{+}(x))$ as input and using the function
\begin{equation*}
f(p)=\min\left(\frac{m_{n}}{2}\left( [V(r_1)]^{-1}+\delta _{G_{i,r_{1}}}/2\right) ^{-1}p,%
\frac{1}{2}\right)= \frac{m_{n}}{2}\left( [V(r_1)]^{-1}+\delta _{G_{i,r_{1}}}/2\right)
^{-1}p.
\end{equation*}%
%
%
%

To conclude, we synthesize the complete steps for simulating $N^{\prime }$
in the following algorithm.

\begin{algorithm}
		\caption{Simulation of $(N'|x\in G_i)$}
		\begin{algorithmic}[1]
			\Repeat
			\State Sample $n$ from random variable $N$.
			\State Sample $x$ from uniform distribution $U_{G_i}\sim\text{Unif}(G_i)$.
			\Repeat
			\State Sample $u$ from $U\sim\text{Unif}(0,m_{n})$
			\State Sample $\lambda$ from distribution of $\Lambda_{n}^{+}(x)$ using Algorithm \ref{Algo-Lambda-Plus}.
			\State Use $I(u<\lambda)$ as an input of Bernoulli factory associated with function
			\[
			f(p) = \min\left(\frac{m_{n}}{2}\left( [V(r_1)]^{-1}+\delta _{G_{i,r_{1}}}/2\right) ^{-1}p,\frac{1}{2}\right)
			\]
			\Until{Bernoulli factory produces an output $\gamma$.}
			\Until{$\gamma = 1$}
			\State \textbf{Output} $n$.
		\end{algorithmic}
		\label{Algo-N-Prime}
	\end{algorithm}

\subsection{Sampling of $(X(1)|N^{\prime },X(1)\in G_i)$\label%
{Sec-Cond-Sample-X}}

In this section, we will focus on sampling $(X(1)|N^{\prime },X(1)\in G_i)$.

Without loss of generality, let us assume $N^{\prime }=n$ throughout the the
rest of this section. According to equation \eqref{Eqn-Dist-Joint-N-X} and
Lemma \ref{Thm-Lambda-Plus}, the conditional distribution of $X(1)$ can be
written as
\begin{equation}
\mathbb{P}(X(1)\in dx|N^{\prime }=n,X(1)\in G_{i})=C_{n,G_{i}}\mathbb{E}%
[\Lambda _{n}^{+}(x)],  \label{Eqn-X-Conditional}
\end{equation}%
where $C_{n,G_{i}}$ is a constant independent of $x$. Once again, as we
shall see $(X(1)|N^{\prime }=n,X(1)\in G_{i})$ can be be simulated by acceptance-rejection. We
use the uniform distribution $U_{G_{i}}$ as the proposal distribution, and
accept the proposal with probability $m_{n}^{-1}\times \mathbb{E}[\Lambda
_{n}^{+}(x)]$, so we can accept if and only if $I\left( U\leq \Lambda
_{n}^{+}(x)\right) =1$, where $U\sim \text{Unif}(0,m_n)$ is independent of
everything else. The output of the acceptance-rejection follows the desired distribution. The
explicit procedure for simulating $(X(1)|N^{\prime },X(1)\in G_{i})$ is
given in the following algorithm.
\begin{algorithm}
		\caption{Simulation of $X(1)$ Conditional on $N' = n, X(1)\in G_i$}
		\begin{algorithmic}[1]
			\Repeat
			\State Sample $x$ from uniform distribution $U_{G_i}\sim\text{Unif}(G_i)$.
			\State Sample $u$ from $U\sim\text{Unif}(0,m_{n})$
			\State Sample $\lambda$ from distribution of $\Lambda_{n}^{+}(x)$ using Algorithm \ref{Algo-Lambda-Plus}.
			\Until{$u\leq \lambda$}
			\State \textbf{Output} $x$.
		\end{algorithmic}
		\label{Algo-XT-Cond}
	\end{algorithm}

\section{Conclusion\label{Section_Conclusion_Discussions}}

The main contribution of this paper is the construction of the first generic
exact simulation algorithm for multidimensional diffusions. The algorithm
extensively uses several localization ideas and the application of TES
techniques. But it also combines ideas from multilevel Monte Carlo in order
to construct a sequence of random variables which ultimately provides an
unbiased estimator for the underlying transition density.

Although the overall construction can be implemented with a finite
termination time almost surely, the expected running time is infinite. Thus,
the contribution of the paper is of theoretical nature, showing that it is
possible to perform exact sampling of multivariate diffusions without
applying Lamperti's transformation. However, more research is needed in
order to investigate if the algorithm can be modified to be implemented in
finite expected time, perhaps under additional assumptions. Alternatively, perhaps some
controlled bias can be introduced while preserving features such as
positivity and convexity in the applications discussed at the end of the
Introduction. To this end, we conclude with a discussion of the elements in
the algorithm which are behind the infinite expected termination time.

There are three basic problems that cause the algorithm to have infinite
expected termination time. Two of them can be appreciated already from the
constant diffusion discussion and involves the use of the localization
techniques that we have introduced. The third problem has to do with the
application of the Bernoulli factory.

$\bullet\quad$\textbf{Problem 1:} The first problem arises when trying to
sample a Bernoulli of the form $I\left( X\left( 1\right) \in G\right) $.
Given $\epsilon _{n}>0$, sampling $X_{\epsilon _{n}}\left( 1\right) $ such
that $\left\Vert X_{\epsilon _{n}}\left( 1\right) -X\left( 1\right)
\right\Vert \leq \epsilon _{n}$ takes $O(\epsilon _{n}^{-\left( 2+\delta
\right) })$ computational cost for any $\delta >0$. So, if $G$ is a unit
hypercube in $d$ dimensions, using the density estimates for $X\left(
1\right) $, we obtain
\begin{equation*}
\mathbb{P}\left( d\left( X\left( 1\right) ,\partial G_{i}\right) \leq
\varepsilon \right) \geq c_{G}\varepsilon
\end{equation*}%
for some $c_{G}>0$. Therefore, if $N_{0}$ is the computational cost required
to sample $I\left( X\left( 1\right) \in G\right) $ we have that for some $%
\delta _{0}>0$
\begin{equation*}
\mathbb{P}\left(N_{0}>\frac{1}{\varepsilon ^{2+\delta}}\right) \geq \mathbb{P%
}\left( d\left(X\left( 1\right) ,\partial G_{i}\right) \leq \delta
_{0}\varepsilon \right) \geq c_{G}\delta _{0}\varepsilon .
\end{equation*}%
Therefore,
\begin{equation*}
\mathbb{P}\left( N_{0}>x\right) \geq c_{G}\delta _{0}\frac{1}{x^{1/(2+\delta
)}},
\end{equation*}%
which yields that $\mathbb{E}\left( N_{0}\right) =\infty $.

$\bullet\quad$\textbf{Problem 2:} The second problem arises in the acceptance-rejection step
applied in Lemma \ref{Lemma-Conditional-Probability}, which requires
sampling $X\left( 1\right) $ under $\tilde{\mathbb{P}}$ conditional on $\max
(i,(i+1)U)<L(1)<i+1$. Directly sampling from this conditional law might be
inefficient if $i$ is large. However, this problem can be mitigated using
rare-event simulation techniques, which might be available in the presence
of additional structure on the drift because under $\tilde{\mathbb{P}}\left(
\cdot \right) $, $X\left( \cdot \right) $ follows a Brownian motion.

$\bullet\quad$\textbf{Problem 3:} Arises because, as indicated in Theorem %
\ref{Thm-Bernoulli}, the computational complexity of the Bernoulli factory
of a linear function of the form $f\left( p\right) =\min \left( \alpha
p,1-\epsilon \right) $ scales in order $O\left( \alpha /\epsilon \right) $.
In our case, we are able to select $\epsilon =1/2$ and we invoke the
Bernoulli factory in Algorithms \ref{Algo-Lambda-Plus} and \ref{Algo-N-Prime}%
. In Algorithm \ref{Algo-Lambda-Plus}, $\alpha =O\left( k^{2}\right) $,
given $T^{\prime }=k$ and $\mathbb{E}\left( T^{\prime }\right) =\infty $. In
Algorithm \ref{Algo-N-Prime}, $\alpha =O\left( m_{n}\right) $, given $N=n$.
Although the bound which is used to define $m_{n}$ in Lemma \ref%
{Thm-Lambda-Plus} is far from optimal, in its current form, $m_{n}=O\left(
n^{3d+2}\right) $, we have that $\mathbb{E}\left( N\right) =\infty $.

\acks

We gratefully acknowledge support from the following NSF grants 1915967, 1820942, 1838676 as well as AFOSR.
\bibliographystyle{apt}

\clearpage
\appendix

\section{Transition Density Estimates \label{Appendix-Tech}}

In the appendix, we will discuss some assumptions which are sufficient for
the applicability of Conditions \ref{condition:density}, \ref%
{condition:density-lipchitz}, and \ref{condition:density-lower-bound}. In
addition, we also give explicit procedures for computing the constants which
appears in such Conditions.

Consider a matrix valued function $a(\cdot )=(a_{ij}(\cdot ))_{d\times d}:%
\mathbb{R}^{d}\mapsto \mathbb{R}^{d\times d}$ defined as
\begin{equation*}
a_{ij}(x):=\sum_{k=1}^{d^{\prime }}\sigma _{ik}(x)\sigma _{jk}(x)\quad \text{%
for}\quad 1\leq i,j\leq d.
\end{equation*}

\begin{assumption}
\label{Assumption-Bound} Every component of $\mu $ and $a$ are three times
continuously differentiable. Moreover, there exist a constant $M$ such that $%
\Vert \mu ^{(i)}\Vert _{\infty }\leq M$ and $\Vert a^{(i)}\Vert _{\infty
}\leq M$ for $i=0,1,2,3$.
\end{assumption}

\begin{assumption}
\label{Assumption-Uniform-Elliptic} There exist constants $%
0<\lambda_{\downarrow}<\lambda_{\uparrow}<\infty$, such that for all $x\in
\mathbb{R}^d$ and $\xi = (\xi_i)_{d}\in \mathbb{R}^d$, we have
\begin{equation*}
\lambda_{\downarrow}\|\xi\|_{2}\leq \sqrt{\xi^{T}a(x)\xi} \leq
\lambda_{\uparrow}\|\xi\|_{2}.
\end{equation*}
\end{assumption}

Under Assumption \ref{Assumption-Bound} and \ref{Assumption-Uniform-Elliptic}%
, it is proved in \cite{friedman2013partial} that the SDE \eqref{eq:SDE}
possesses a transition density denoted by $p(x,t;y,\tau )$, which satisfies
\begin{equation*}
\mathbb{P}(X(t)\in dx|X(\tau )=y)=p(x,t;y,\tau )dx
\end{equation*}%
for $\tau <t$. Therefore, Condition \ref{condition:density} is proved given
assumptions \ref{Assumption-Bound} and \ref{Assumption-Uniform-Elliptic}.

In the following Proposition, we will establish Condition \ref%
{condition:density-lipchitz} via Kolmogorov forward equation.

\begin{proposition}
Suppose Assumptions \ref{Assumption-Bound} and \ref%
{Assumption-Uniform-Elliptic} are satisfied, then for any relatively compact
set $S$, the density $p(\cdot )=p(\cdot ,T;x_{0},0)$ is Lipschitz continuous
with Lipschitz constant $C_{S}$, i.e.
\begin{equation*}
|p(x)-p(y)|\leq C_{S}\Vert x-y\Vert _{2}\quad \quad \forall x,y\in S.
\end{equation*}%
Furthermore, $C_{S}$ can be computed by Algorithm \ref{Algo-Lipchitz}.
\end{proposition}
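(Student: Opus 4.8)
The plan is to obtain a Lipschitz estimate for $p(\cdot) = p(\cdot, T; x_0, 0)$ by combining the Gaussian-type two-sided bounds and derivative bounds for fundamental solutions of uniformly parabolic equations (as developed in Friedman's book, already invoked earlier in the appendix to establish Condition \ref{condition:density}) with a mean-value argument on the segment joining $x$ and $y$. Concretely, under Assumptions \ref{Assumption-Bound} and \ref{Assumption-Uniform-Elliptic} the transition density $p(x,t;y,\tau)$ is the fundamental solution of the Kolmogorov forward (Fokker--Planck) equation
\begin{equation*}
\partial_t p = \tfrac12 \sum_{i,j} \partial_{x_i}\partial_{x_j}\bigl(a_{ij}(x) p\bigr) - \sum_i \partial_{x_i}\bigl(\mu_i(x) p\bigr),
\end{equation*}
and the classical parabolic theory gives, for $t - \tau$ bounded away from $0$ (here $t = T$, $\tau = 0$ fixed), a bound of the form $|\partial_{x_k} p(x,T;x_0,0)| \le K_1 (T)^{-(d+1)/2}\exp(-k_1\|x-x_0\|_2^2/T)$ with constants $K_1, k_1$ depending only on $d$, $M$, $\lambda_\downarrow$, $\lambda_\uparrow$.

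First I would record the relevant derivative estimate for the fundamental solution, citing \cite{friedman2013partial}, and make explicit how its constants are expressed in terms of the data $M, \lambda_\downarrow, \lambda_\uparrow, d$ and the fixed time horizon $T$. Second, given any relatively compact $S$, I would let $S'$ be a bounded convex set (e.g.\ a closed ball) containing $S$, so that for $x, y \in S$ the whole segment $[x,y]$ lies in $S'$; then by the fundamental theorem of calculus along that segment,
\begin{equation*}
|p(x) - p(y)| \le \Bigl(\sup_{z \in S'} \|\nabla p(z)\|_2\Bigr)\, \|x-y\|_2,
\end{equation*}
so one may take $C_S := \sup_{z \in S'} \|\nabla p(z)\|_2$, which by the Gaussian derivative bound is finite and \emph{explicitly computable}: it is at most $\sqrt{d}\, K_1 T^{-(d+1)/2} \sup_{z \in S'}\exp(-k_1 \|z - x_0\|_2^2/T)$, and the supremum of that elementary function over the explicitly known compact set $S'$ is computable. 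Third, I would observe that Algorithm \ref{Algo-Lipchitz} simply implements this recipe — choose an enclosing ball, plug the data-dependent constants into the Gaussian derivative bound, and evaluate the resulting explicit upper bound — and verify it returns a valid $C_S$.

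The main obstacle is not conceptual but bookkeeping: one must make sure the constants $K_1$ and $k_1$ appearing in Friedman's parametrix-based estimates are genuinely effective, i.e.\ traceable to $M$, $\lambda_\downarrow$, $\lambda_\uparrow$, $d$ and $T$ alone, rather than merely ``existing.'' This requires quoting the parametrix construction carefully enough to extract the dependence, and checking that the Hölder/Lipschitz hypotheses on the coefficients required by that theory are implied by Assumption \ref{Assumption-Bound} (three bounded derivatives of $\mu$ and $a$ give in particular bounded, hence Lipschitz, coefficients on all of $\mathbb{R}^d$). A secondary point to handle with a line of care is the passage from the derivative bound on the open parabolic domain to a bound valid up to and including $S$: since $T > 0$ is fixed and bounded away from the singular time $\tau = 0$, the estimate is uniform on compacts and no boundary subtlety arises, so this step is routine once the interior estimate is in hand.
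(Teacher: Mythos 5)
Your proposal is correct and follows essentially the same route as the paper: a gradient bound for the fundamental solution of the Kolmogorov forward equation obtained by tracing explicit constants through Friedman's parametrix construction (bounding $Z$, $\partial_{x_i}Z$, $\mathcal{L}_f Z$, and the series $\Phi=\sum_k(\mathcal{L}_fZ)_k$ in terms of $M$, $\lambda_\downarrow$, $\lambda_\uparrow$, $d$, $T$), followed by integrating the gradient along the segment from $x$ to $y$. The "bookkeeping" you defer is precisely the bulk of the paper's proof, and your remark about passing to a convex enclosure of $S$ before applying the mean-value step is a point the paper itself glosses over.
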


\begin{proof}
Our methodology is closely related to parametrix method introduced in \cite%
{friedman2013partial}. Following the same scheme, we focus on explicit
computation of the constants.

Under Assumptions \ref{Assumption-Bound} and \ref%
{Assumption-Uniform-Elliptic}, $p(\cdot,\cdot;y,\tau)$ is a solution of
Kolmogorov forward equation, namely,
\begin{equation}  \label{Eqn-PDE}
\frac{\partial}{\partial t}p(x,t;y,\tau) = -\sum_{i =1}^d \frac{\partial}{%
\partial x_i}[\mu_i(x)p(x,t;y,\tau)] +\frac 1 2 \sum_{i=1}^d\sum_{j=1}^d%
\frac{\partial^2}{ \partial x_i\partial x_j}[a_{ij}(x)p(x,t;y,\tau)].
\end{equation}

We shall rewrite equation \eqref{Eqn-PDE} into its non-divergence form as
\begin{equation*}
\mathcal{L}_{f}p:=\sum_{i,j=1}^{d}a_{ij}(x)\frac{\partial ^{2}p(x,t;y,\tau )%
}{\partial x_{i}\partial x_{j}}+\sum_{i=1}^{d}b_{i}(x)\frac{\partial
p(x,t;y,\tau )}{\partial x_{i}}+c(x)p(x,t;y,\tau )-\frac{\partial
p(x,t;y,\tau )}{\partial t}=0,
\end{equation*}%
where
\begin{equation*}
b_{i}(x):=\sum_{j=1}^{d}\frac{\partial a_{ij}(x)}{\partial x_{j}}%
-\mu_{i}(x), c(x):=\frac{1}{2}\sum_{i=1}^{d}\sum_{j=1}^{d}\frac{\partial
^{2}a_{ij}(x)}{\partial x_{i}\partial x_{j}}-\sum_{i=1}^{d}\frac{\partial
\mu _{i}(x)}{\partial x_{i}},
\end{equation*}%
and $\mathcal{L}_{f}$ is a uniform parabolic operator. By Assumption \ref%
{Assumption-Bound}, it follows that
\begin{equation}  \label{Eqn-Bound-b-c}
\Vert b(x)\Vert _{\infty }\leq (d+1)M, \quad\quad|c(x)|\leq (0.5d^{2}+d)M.
\end{equation}%
We denote by $a^{-1}(x)=(a_{ij}^{-1}(x))_{d\times d}$ the inverse matrix of $%
(a_{ij}(x))_{d\times d}$, and define
\begin{equation*}
\theta (x,\xi ):=\sum_{i,j=1}^{d}a_{ij}^{-1}(\xi )(x_{i}-\xi _{i})(x_{j}-\xi
_{j}).
\end{equation*}%
Assumption \ref{Assumption-Uniform-Elliptic} implies that for all $\xi \in
\mathbb{R}^{d}$
\begin{equation*}
\lambda _{\uparrow }^{-1}\Vert \xi \Vert _{2}\leq \sqrt{\xi ^{T}a^{-1}(x)\xi
}\leq \lambda _{\downarrow }^{-1}\Vert \xi \Vert _{2}.
\end{equation*}%
Following the idea of the parametrix method, we also define a partial
differential equations with constant coefficients, namely,
\begin{equation*}
\mathcal{L}_{0}^{y}u:=\sum_{i,j=1}^{n}a_{ij}(y)\frac{\partial ^{2}u(x,t)}{%
\partial x_{i}\partial x_{j}}-\frac{\partial u(x,t)}{\partial t}=0.
\end{equation*}%
The fundamental solution of function $\mathcal{L}_{0}^{y}u=0$ is given by
\begin{equation*}
Z(x,t;\xi ,\tau )=C_{Z}(\xi )w(x,t;\xi ,\tau ),
\end{equation*}%
where
\begin{equation*}
C_{Z}(\xi ):=(2\sqrt{\pi })^{-d}[\det (a_{ij}(\xi ))]^{1/2}\leq (2\sqrt{\pi }%
)^{-d}\lambda _{\uparrow }^{d/2}=:C_{0},
\end{equation*}%
\begin{equation*}
w(x,t;\xi ,\tau ):=(t-\tau )^{-d/2}\exp \left( -\frac{\theta (x,\xi )}{%
4(t-\tau )}\right) .
\end{equation*}%
According to Theorem 1.3 and Theorem 1.10 in \cite{friedman2013partial}, $%
p(x,t;\xi ,\tau )$ can be represented by the parametrix method as%
\begin{equation*}
p(x,t;\xi ,\tau )=Z(x,t;\xi ,\tau )+\int_{\tau }^{t}\int_{\mathbb{R}%
^{d}}Z(x,t;y,s)\Phi (y,s;\xi ,\tau )dyds.
\end{equation*}%
where
\begin{align*}
\Phi (x,t;\xi ,\tau )& :=\sum_{k =1}^{\infty }(\mathcal{L}_{f}Z)_{k
}(x,t;\xi ,\tau ), \\
(\mathcal{L}_{f}Z)_{1}& :=\mathcal{L}_{f}Z, \\
(\mathcal{L}_{f}Z)_{k +1}& :=\int_{\tau }^{t}\int_{\mathbb{R}^{d}}\mathcal{L}%
_{f}Z(x,t;y,s)(\mathcal{L}_{f}Z)_{k }(y,s;\xi ,\tau )dyds.
\end{align*}%
for $k \geq 1$. Furthermore, the partial derivatives of the fundamental
solution admit the following expression,
\begin{equation}
\frac{\partial }{\partial x_{i}}p(x,t;\xi ,\tau )=\frac{\partial }{\partial
x_{i}}Z(x,t;\xi ,\tau )+\int_{\tau }^{t}\int_{\mathbb{R}^{d}}\frac{\partial
}{\partial x_{i}}Z(x,t;y,s)\Phi (y,s;\xi ,\tau )dyds.
\label{Eqn-Diff-Density}
\end{equation}%

Let us pick $\epsilon \in (0,1)$, then we can derive a bound for $Z$ as
\begin{equation*}
|Z(x,t;\xi ,\tau )|\leq C_{0}\times (t-\tau )^{-d/2}\exp \left( -\frac{%
(1-\epsilon )\Vert x-\xi \Vert _{2}^{2}}{4\lambda _{\downarrow }(t-\tau )}%
\right) .
\end{equation*}%
For the bound of $\partial Z(x,t;\xi ,\tau )/\partial x_{i}$, note that
\begin{equation*}
\left\vert \frac{\partial Z(x,t;\xi ,\tau )}{\partial x_{i}}\right\vert
=[4(t-\tau )]^{-1}\left\vert \frac{\partial \theta (x,\xi )}{\partial x_{i}}%
\right\vert C_{Z}(\xi )w(x,t;\xi ,\tau ),
\end{equation*}%
and that
\begin{equation*}
\left\vert \frac{\partial \theta (x,\xi )}{\partial x_{i}}\right\vert
=\left\vert 2\sum_{j=1}^{d}a_{ij}^{-1}(\xi )(x_{j}-\xi _{j})\right\vert \leq
2\lambda _{\uparrow }^{-1}\Vert x-\xi \Vert _{2}.
\end{equation*}%
Combining the definition of $C_Z(\cdot)$ and the above two equations imply
that
\begin{align*}
\left\vert \frac{\partial Z(x,t;\xi ,\tau )}{\partial x_{i}}\right\vert &
\leq \frac{\lambda _{\uparrow }^{-1}}{2}C_{0}|x-\xi |(t-\tau )^{-\frac{d+1}{2%
}}(\theta (x,\xi ))^{-1/2}\left[ \frac{\theta (x,\xi )}{t-\tau }\right]
^{1/2} \\
& \times \exp \left( -\frac{\epsilon \theta (x,\xi )}{4(t-\tau )}\right)
\exp \left( -\frac{(1-\epsilon )\theta (x,\xi )}{4(t-\tau )}\right) .
\end{align*}%
Applying the inequalities
\begin{equation*}
\left[ \frac{\theta (x,\xi )}{t-\tau }\right] ^{1/2}\exp \left( -\frac{%
\epsilon \theta (x,\xi )}{4(t-\tau )}\right) \leq \sup_{x\in \lbrack
0,+\infty )}x^{\frac{1}{2}}e^{-\frac{\epsilon x}{4}}=\left( \frac{2}{%
\epsilon e}\right) ^{1/2}
\end{equation*}%
and
\begin{equation*}
|x-\xi |(\theta (x,\xi ))^{-1/2}\leq \lambda _{\downarrow }^{1/2},
\end{equation*}%
we obtain
\begin{equation*}
\left\vert \frac{\partial Z(x,t;\xi ,\tau )}{\partial x_{i}}\right\vert \leq
\frac{C_{1}}{(t-\tau )^{\frac{d+1}{2}}}\exp \left( -\frac{(1-\epsilon )\Vert
x-\xi \Vert _{2}^{2}}{4\lambda _{\downarrow }(t-\tau )}\right)
\end{equation*}%
by setting
\begin{equation*}
C_{1}:=(2\epsilon e)^{-1/2}\lambda _{\downarrow }^{1/2}\lambda _{\uparrow
}^{-1}C_{0}.
\end{equation*}

Similarly, we can derive a bound for $\partial ^{2}Z(x,t;\xi ,\tau
)/\partial x_{i}\partial x_{j}$ and $\partial ^{2}Z(x,t;\xi ,\tau )/\partial
x_{i}^{2}$. For $i\neq j$,
\begin{align*}
\left\vert \frac{\partial ^{2}Z(x,t;\xi ,\tau )}{\partial x_{i}\partial x_{j}%
}\right\vert & \leq \frac{C_{2}}{(t-\tau )^{\frac{d+1}{2}}|x-\xi |}\exp
\left( -\frac{(1-\epsilon )\Vert x-\xi \Vert _{2}^{2}}{4\lambda _{\downarrow
}(t-\tau )}\right) , \\
\left\vert \frac{\partial ^{2}Z(x,t;\xi ,\tau )}{\partial x_{i}^{2}}%
\right\vert & \leq \frac{C_{3}}{(t-\tau )^{\frac{d+1}{2}}|x-\xi |}\exp
\left( -\frac{(1-\epsilon )\Vert x-\xi \Vert _{2}^{2}}{4\lambda _{\downarrow
}(t-\tau )}\right) .
\end{align*}%
where
\begin{align*}
C_{2}& :=C_{0}\left( \frac{4\lambda _{\downarrow }}{e\epsilon \lambda
_{\uparrow }}\right) ^{2}, \\
C_{3}& :=C_{0}\left( \frac{4\lambda _{\downarrow }}{e\epsilon \lambda
_{\uparrow }}\right) ^{2}+C_{0}\frac{M}{4}\left( \frac{2\lambda _{\downarrow
}}{\epsilon e}\right) ^{\frac{1}{2}}.
\end{align*}%
By definition of $Z(\cdot)$ we can observe that
\begin{align*}
\mathcal{L}_{f}Z(x,t;\xi ,\tau )& =\sum_{i,j=1}^{d}[a_{ij}(x)-a_{ij}(\xi )]%
\frac{\partial ^{2}Z(x,t;\xi ,\tau )}{\partial x_{i}\partial x_{j}} \\
& +\sum_{i=1}^{d}b_{i}(x)\frac{\partial Z(x,t;\xi ,\tau )}{\partial x_{i}}%
+c(x)Z(x,t;\xi ,\tau ).
\end{align*}%
Suppose $0\leq t-\tau \leq T$ in the sequel. By considering the upper bounds
of partial derivatives of $Z$, as well as \eqref{Eqn-Bound-b-c} and
Assumption \ref{Assumption-Bound}, we obtain
\begin{equation}
|\mathcal{L}_{f}Z(x,t;\xi ,\tau )|\leq \frac{C_{4}}{(t-\tau )^{\frac{d+1}{2}}%
}\exp \left( -\frac{(1-\epsilon )\Vert x-\xi \Vert _{2}^{2}}{4\lambda
_{\downarrow }(t-\tau )}\right) .  \label{Eqn-LZ}
\end{equation}%
where
\begin{equation*}
C_{4}:=dMC_{3}+d(d-1)MC_{2}+d(d+1)MC_{1}+T^{\frac{1}{2}}(0.5d^{2}+d)MC_{0}.
\end{equation*}%

Now, in order to find a bound for $\Phi (x,t;\xi ,\tau )$, we need to
introduce a technical lemma.

\begin{lemma}[Lemma 1.3 of \protect\cite{friedman2013partial}]
\label{Lemma-Integral} If $\beta $ and $\gamma $ are two constants in $%
(-\infty ,\frac{d}{2}+1)$, then
\begin{align*}
& \int_{\tau }^{t}\int_{\mathbb{R}^{d}}(t-s)^{-\beta }\exp \left( -\frac{%
h\Vert x-y\Vert _{2}^{2}}{4(t-s)}\right) (s-\tau )^{-\gamma }\exp \left( -%
\frac{h\Vert y-\xi \Vert _{2}^{2}}{4(s-\tau )}\right) dyds \\
& =\left( \frac{4\pi }{h}\right) ^{\frac{d}{2}}\text{Beta}\left( \frac{d}{2}%
-\beta +1,\frac{d}{2}-\gamma +1\right) (t-\tau )^{\frac{d}{2}+1-\beta
-\gamma }\exp \left( -\frac{h\Vert x-\xi \Vert _{2}^{2}}{4(t-\tau )}\right) ,
\end{align*}%
where Beta$(\cdot )$ is Beta function.
\end{lemma}

Due to \eqref{Eqn-LZ} and Lemma \ref{Lemma-Integral}, we can derive
\begin{align*}
| (\mathcal{L}_{f}Z)_2(x,t;\xi,\tau)|&\leq \int_\tau^t\int_{\mathbb{R}^d} |
\mathcal{L}_{f}Z(x,t;y,s)||\mathcal{L}_{f}Z(y,s;\xi,\tau)|dyds. \\
&\leq \frac{C_5C_6^2}{1!}(t-\tau)^{1-\frac{d}{2} }\exp\left(-\frac{%
(1-\epsilon)\|x-\xi\|_{2}^2}{4\lambda_{\downarrow}(t-\tau)}\right),
\end{align*}
where
\begin{align*}
C_5:= \left(\frac{4\pi\lambda_{\downarrow}}{1-\epsilon}\right)^{-\frac d 2},&
& C_6:= C_4\left(\frac{4\pi\lambda_{\downarrow}}{1-\epsilon}\right)^{\frac d
2}.
\end{align*}
By induction we can show that, for any positive integer $m$,
\begin{align*}
| (\mathcal{L}_{f}Z)_m(x,t;\xi,\tau)|\leq \frac{C_5C_6^m}{(m-1)!}
(t-\tau)^{m-\frac{d}{2}-1}\exp\left(-\frac{(1-\epsilon)\|x-\xi\|_{2}^2}{%
4\lambda_{\downarrow}(t-\tau)}\right).
\end{align*}
It turns out that
\begin{align*}
\Phi(x,t;\xi,\tau)&\leq\sum_{m=1}^\infty| (\mathcal{L }Z)_m(x,t;\xi,\tau)|
\notag \\
&\leq \frac{C_7}{(t-\tau)^{\frac{d}{2}}}\exp\left(-\frac{ (1-\epsilon)\|x-%
\xi\|_{2}^2}{4\lambda_{\downarrow}(t-\tau)}\right)
\end{align*}
where
\begin{align*}
C_7:= \sum_{m=1}^\infty\frac{C_5C_6^m}{(m-1)!} T^{m-1} =
C_{5}C_{6}\exp(C_{6}T)
\end{align*}
Recalling equation \eqref{Eqn-Diff-Density}, we can apply Lemma \ref%
{Lemma-Integral} again and conclude that
\begin{align*}
\left|\frac{\partial}{\partial x_i}p(x,t;\xi,\tau)\right| &\leq \left|\frac{
\partial}{\partial x_i}Z(x,t;\xi,\tau)\right|+\int_\tau^t\int_{\mathbb{R}^d}
\left|\frac{\partial}{\partial x_i}Z(x,t;y,s)\Phi(y,s;\xi,\tau)\right|dyds.
\notag \\
&\leq \left[\frac{C_1}{(t-\tau)^\frac{d+1}{2}}+\frac{C_8}{(t-\tau)^\frac{ d-1%
}{2}}\right]\exp\left(-\frac{(1-\epsilon)\|x-\xi\|_{2}^2}{
4\lambda_{\downarrow}(t-\tau)}\right),
\end{align*}
where
\begin{align*}
C_8:=2C_1C_7\left(\frac{4\pi\lambda_{\downarrow}}{1-\epsilon}\right)^{\frac
d 2}.
\end{align*}
Therefore, we obtain an upper bound for $|\nabla_x p(x,t;\xi,\tau)|$, by
considering
\begin{equation*}
|\nabla_x p(x,t;\xi,\tau)|\leq d\times\left|\frac{\partial}{\partial x_i}
p(x,t;\xi,\tau)\right|.
\end{equation*}
Therefore, for all $x,y \in S$ we have
\begin{equation*}
|p(x)-p(y)|\leq C_S\|x-y\|_2,
\end{equation*}
where
\begin{equation*}
C_S = \left[\frac{dC_1}{T^\frac{d+1}{2}}+\frac{dC_8}{T^\frac{d-1}{2}}\right]
\exp\left(-\frac{(1-\epsilon)\inf_{\bar{x}\in S}\|\bar{x}-x_0\|_2}{%
4\lambda_{\downarrow}T}\right)
\end{equation*}
\end{proof}

\begin{algorithm}
		\caption{Computation of Local Lipchitz Constant $C_S$}
		\begin{algorithmic}[1]
			\State \textbf{Input}: $M$ in Assumption \ref{Assumption-Bound}, $\lambda_{\downarrow}$ and $\lambda_{\uparrow}$ in Assumption \ref{Assumption-Uniform-Elliptic}, dimension $d$, time $T$, an arbitrary number $\epsilon\in(0,1)$.
			\State $C_0 \gets(2\sqrt{\pi})^{-d}\lambda_{\uparrow}^{d/2}$.
			\State $C_1 \gets (2\epsilon e)^{-1/2}\lambda_{\downarrow}^{1/2} \lambda_{\uparrow}^{-1}C_0.$
			\State $C_2\gets C_0\left(\frac{4\lambda_{\downarrow}}{e\epsilon\lambda_{\uparrow}}%
			\right)^{2}. $
			\State $C_3\gets C_0\left(\frac{4\lambda_{\downarrow}}{e\epsilon\lambda_{\uparrow}}%
			\right)^{2} +C_0 \frac{M}{4}\left(\frac{2\lambda_{\downarrow}}{\epsilon e }%
			\right)^\frac{1}{2}.$
			\State $C_4\gets dMC_3 + d(d-1)MC_2 + d(d+1)MC_1+T^\frac{1}{2}(0.5d^2+d)MC_0.$
			\State $C_5\gets \left(\frac{4\pi\lambda_{\downarrow}}{1-\epsilon}\right)^{-\frac d 2}.$
			\State $C_6\gets C_4\left(\frac{4\pi\lambda_{\downarrow}}{1-\epsilon}\right)^{\frac d 2}.$
			\State $C_7\gets C_{5}C_{6}\exp(C_{6}T).$
			\State $C_8\gets 2C_1C_7\left(\frac{4\pi\lambda_{\downarrow}}{1-\epsilon}\right)^{\frac d 2}.$
			\State $C_S\gets\left[\frac{dC_1}{T^\frac{d+1}{2}}+\frac{dC_8}{T^\frac{d-1}{2}}\right]
			\exp\left(-\frac{(1-\epsilon)\inf_{\bar{x}\in S}\|\bar{x}-x_0\|_2}{4\lambda_{\downarrow}T}\right).$
			\State \textbf{Output} $C_S$.
		\end{algorithmic}
		\label{Algo-Lipchitz}
	\end{algorithm}

Next, we will propose a computational procedure for lower bounds of
transition density. There is a substantial amount of literature that studies
lower bounds for the transition density of diffusions, through analytical
approaches or probabilistic approaches. For instance, Aronson \cite%
{aronson1967bounds} develops estimates of lower bounds of fundamental
solutions of second order parabolic PDEs in divergence form. Using Malliavin
calculus, Kusuoka and Stroock \cite{kusuoka1987applications} derived a lower
bound for the transition density of uniformly elliptic diffusions. Bally
\cite{bally2006lower} generalized the idea of \cite{kusuoka1987applications}
to locally elliptic diffusions. We follow the approach suggested by Sheu
\cite{sheu1991some} and review it in order to find explicit expressions to
obtain a computable lower bound.

In order to keep our paper self-contained, first we need to introduce some
notations used later.

Let $\mathcal{L}_{b}$ be the generator of Komolgorov backward equation:
\begin{equation*}
\mathcal{L}_{b}u(x,t):=\frac{1}{2}\sum_{i,j=1}^{n}a_{ij}(x)\frac{\partial
^{2}u(x,t)}{\partial x_{i}\partial x_{j}}+\sum_{i=1}^{d}\mu _{i}(x)\frac{%
\partial u(x,t)}{\partial x_{i}}-\frac{\partial u(x,t)}{\partial t}.
\end{equation*}%
The transition density as a function of $(x,t)\mapsto p(y,t;x,0)$ coincides
with the fundamental solution of Komolgorov backward equation:
\begin{align*}
\mathcal{L}_{b}u(x,t)& =0,\quad t>0,x\in \mathbb{R}^{d} \\
u(0,x)& =u_{0}(x).
\end{align*}

Throughout the rest of this section, we suppose that Assumptions \ref%
{Assumption-Bound} and \ref{Assumption-Uniform-Elliptic} are in force.

Recall that $a^{-1}(x)$ is the inverse matrix of $a(x)$, and define
\begin{align*}
k(x,\psi) = \frac{1}{2}\sum_{i,j=1}^d
a^{-1}_{ij}(x)(\mu_i(x)-\psi_i)(\mu_j(x) - \psi_j).
\end{align*}

For a fixed $y_{0}\in \mathbb{R}^{d}$, we define
\begin{equation*}
f^{\beta }(y;y_{0}):=\left( \frac{1}{\sqrt{2\pi \beta }}\right) ^{d}\frac{1}{%
\sqrt{\det a(y_{0})}}\exp \left( -\frac{1}{2\beta }%
\sum_{i,j=1}^{d}a_{ij}^{-1}(y_{0})(y-y_{0})_{i}(y-y_{0})_{j}\right) ,
\end{equation*}%
and
\begin{equation*}
p^{\beta }(y_{0},t;x,0):=\mathbb{E}_{x}[f^{\beta }(X(t);y_{0})].
\end{equation*}%
The continuity of the density implies
\begin{equation}
\lim_{\beta \rightarrow 0}p^{\beta }(y_{0},t;x,0)=p(y_{0},t;x,0).
\label{Eqn-p-beta}
\end{equation}%
For simplicity, we also define the logarithmic transform of $p$ and $%
p^{\beta }$ as
\begin{align*}
J(t,x)& :=-\log (p(y_{0},t;x,0)), \\
J^{\beta }(t,x)& :=-\log (p^{\beta }(y_{0},t;x,0)).
\end{align*}

To prepare the analysis, which is based on stochastic control, we introduce
the space of control functions by $\mathcal{F}_{T,x}$. The class $\mathcal{F}%
_{T,x}$ is defined as a family of measurable functions $\psi :[0,T]\times
\mathbb{R}^{d}\rightarrow \mathbb{R}^{d}$ such that the SDE
\begin{equation*}
d\eta (t)=\psi (t,\eta (t))dt+\sigma (\eta (t))dW(t),\quad \eta (0)=x
\end{equation*}%
has a weak solution $\eta (\cdot )$ that satisfies
\begin{equation*}
\mathbb{E}\left( \int_{0}^{T}\Vert \psi (t,\eta (t))\Vert _{2}^{2}dt\right)
<\infty .
\end{equation*}

Now we state a lemma that is crucial for proving the main result of this
section.

\begin{lemma}
\label{Lemma-J} Recall the definition of $\mathcal{F}_{T,x}$ and $%
\eta(\cdot) $ from previous paragraph, then we have
\begin{align*}
J^\beta(T,x) = \inf_{\psi\in \mathcal{F}_{T,x}} \mathbb{E}\left(\int_0^T
k(\eta(t),\psi(t))dt+J^{\beta}(0,\eta(T))\right).
\end{align*}
Together with \eqref{Eqn-p-beta}, we see that
\begin{align}  \label{Eqn-J}
J(T,x) = \lim_{\beta\rightarrow 0 } \inf_{\psi\in \mathcal{F}_{T-\beta,x}}
\mathbb{E}\left(\int_0^{T-\beta}
k(\eta(t),\psi(t))dt+J^{\beta}(0,\eta(T-\beta))\right).
\end{align}
\end{lemma}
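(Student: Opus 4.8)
The plan is to identify $J^\beta$ with the value function of a stochastic control problem by means of a Hopf--Cole (logarithmic) transformation, and then to prove the representation by a verification argument. Set $u(t,x):=p^\beta(y_0,t;x,0)=\mathbb{E}_x[f^\beta(X(t);y_0)]$. By the Feynman--Kac formula, $u$ is the solution of the Kolmogorov backward equation
\[
\partial_t u(t,x)=\tfrac12\sum_{i,j=1}^d a_{ij}(x)\,\partial^2_{x_ix_j}u(t,x)+\sum_{i=1}^d\mu_i(x)\,\partial_{x_i}u(t,x),\qquad u(0,\cdot)=f^\beta(\cdot;y_0).
\]
Under Assumptions~\ref{Assumption-Bound} and \ref{Assumption-Uniform-Elliptic} this equation is uniformly parabolic with bounded, smooth coefficients, so $u$ is a classical $C^{1,2}$ solution; moreover $f^\beta(\cdot;y_0)>0$ and a Gaussian-type lower bound for the fundamental solution gives $u>0$ on $(0,\infty)\times\mathbb{R}^d$. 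Hence $J^\beta=-\log u$ is well defined and of class $C^{1,2}$.

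First I would substitute $u=e^{-J^\beta}$ into the backward equation and divide by $-e^{-J^\beta}$ to obtain
\[
\partial_t J^\beta=\tfrac12\sum_{i,j=1}^d a_{ij}(x)\,\partial^2_{x_ix_j}J^\beta-\tfrac12\,(\nabla J^\beta)^T a(x)\,\nabla J^\beta+\mu(x)^T\nabla J^\beta,\qquad J^\beta(0,\cdot)=-\log f^\beta(\cdot;y_0).
\]
Completing the square in the last two terms shows that for every $p\in\mathbb{R}^d$,
\[
-\tfrac12\,p^T a(x)\,p+\mu(x)^T p=\inf_{\psi\in\mathbb{R}^d}\Big\{\psi^T p+\tfrac12\,(\mu(x)-\psi)^T a^{-1}(x)(\mu(x)-\psi)\Big\}=\inf_{\psi\in\mathbb{R}^d}\big\{\psi^T p+k(x,\psi)\big\},
\]
the infimum being attained at $\psi^\star=\mu(x)-a(x)p$. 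Therefore $J^\beta$ solves the Hamilton--Jacobi--Bellman equation
\[
\partial_t J^\beta(t,x)=\tfrac12\sum_{i,j=1}^d a_{ij}(x)\,\partial^2_{x_ix_j}J^\beta(t,x)+\inf_{\psi\in\mathbb{R}^d}\big\{\psi^T\nabla J^\beta(t,x)+k(x,\psi)\big\}.
\]

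For the representation itself I would run a verification argument. Fix an admissible control $\psi\in\mathcal{F}_{T,x}$ and let $\eta$ be the corresponding controlled diffusion $d\eta(t)=\psi(t,\eta(t))dt+\sigma(\eta(t))dW(t)$, $\eta(0)=x$; since $\sigma\sigma^T=a$, its generator is $\tfrac12\sum_{i,j}a_{ij}\partial^2_{x_ix_j}+\psi^T\nabla$. Applying It\^{o}'s formula to $s\mapsto J^\beta(T-s,\eta(s))+\int_0^s k(\eta(r),\psi(r))\,dr$ on $[0,T]$ and using the HJB equation in the inequality form $\partial_t J^\beta\le\tfrac12\sum_{i,j}a_{ij}\partial^2_{x_ix_j}J^\beta+\psi^T\nabla J^\beta+k(\cdot,\psi)$, one finds that this process has nonnegative drift; a localization argument, together with uniform integrability coming from the $L^2$ bound on $\psi$ built into $\mathcal{F}_{T,x}$ and from the Gaussian bounds on $J^\beta$ and $\nabla J^\beta$ supplied by the parametrix estimates, upgrades it to a genuine submartingale. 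Comparing $s=0$ with $s=T$ gives $J^\beta(T,x)\le\mathbb{E}\big[\int_0^T k(\eta(r),\psi(r))\,dr+J^\beta(0,\eta(T))\big]$, and taking the infimum over $\psi$ yields the inequality ``$\le$''. For the reverse inequality I would insert the feedback control $\psi^\star(s,z):=\mu(z)-a(z)\nabla J^\beta(T-s,z)$: the smoothness and strict positivity of $p^\beta$ make $\psi^\star$ locally regular, the corresponding SDE has a weak solution, and the parametrix bounds give $\mathbb{E}\int_0^T\|\psi^\star(s,\eta(s))\|_2^2\,ds<\infty$, so $\psi^\star\in\mathcal{F}_{T,x}$; with this choice the drift vanishes identically, the submartingale becomes a martingale, and equality holds. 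This proves the first displayed identity. Applying it with horizon $T-\beta$ in place of $T$ and letting $\beta\downarrow0$, invoking \eqref{Eqn-p-beta} together with the joint continuity of $p(y_0,\cdot;x,0)$ so that $J^\beta(T-\beta,x)\to J(T,x)$, gives \eqref{Eqn-J}.

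The delicate part is the verification step: one has to make rigorous (i) the integrability that turns the local martingale into a true (sub)martingale and legitimizes passing to expectations --- this is exactly where uniform ellipticity and the Gaussian/parametrix bounds on $p^\beta$ and its gradient enter --- and (ii) the admissibility of the optimal feedback $\psi^\star$ in $\mathcal{F}_{T,x}$, i.e.\ the square-integrability of $\psi^\star(\cdot,\eta(\cdot))$. It is precisely because $p^\beta$, unlike $p$ near $t=0$, is smooth and strictly positive all the way down to $t=0$ --- which is the whole point of the $\beta$-regularization --- that the infimum in the HJB equation is realized by an \emph{admissible} feedback rather than merely approximated.
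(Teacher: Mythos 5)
The paper gives no proof of this lemma at all---it simply cites \cite{fleming2012deterministic}---and your argument is precisely the standard logarithmic-transformation-plus-verification proof that underlies that reference (turn the backward Kolmogorov equation for $p^\beta$ into an HJB equation for $J^\beta=-\log p^\beta$ by completing the square, then verify via It\^o), so it is correct and in the same spirit as the source the authors delegate to. The only points requiring care are the ones you already flag: upgrading the local (sub)martingale to a genuine one, checking admissibility of the feedback $\psi^\star=\mu-a\nabla J^\beta$, and noting that the final limit needs continuity of $p(y_0,\cdot\,;x,0)$ in $t$ rather than just \eqref{Eqn-p-beta} at fixed $t$---all of which hold under Assumptions \ref{Assumption-Bound} and \ref{Assumption-Uniform-Elliptic}.
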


\begin{proof}
See \cite{fleming2012deterministic}.
\end{proof}

\begin{theorem}
Suppose Assumptions \ref{Assumption-Bound} and \ref%
{Assumption-Uniform-Elliptic} are satisfied. Then, for any relatively
compact set $S$, the density $p(\cdot )=p(\cdot ,T;x_{0},0)$ has a uniform
lower bound $\delta _{S}>0$ in $S$, i.e.
\begin{equation*}
p(x)\geq \delta _{S}\quad \forall x\in S.
\end{equation*}%
Furthermore, $\delta _{S}$ can be computed by Algorithm \ref%
{Algo-Lower-Bound}.
\end{theorem}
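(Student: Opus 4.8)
The plan is to turn the claimed lower bound $p(x)\ge\delta_S$ into an \emph{upper} bound on the logarithmic transform. In the notation of this section the density $p(\cdot)=p(\cdot,T;x_0,0)$ evaluated at a point of $S$ is $p(y_0,T;x_0,0)$ with $y_0$ ranging over $S$ and $x_0$ the fixed initial state, so it is enough to show that $J(T,x_0)=-\log p(y_0,T;x_0,0)\le -\log\delta_S$ uniformly for $y_0\in S$. For this I would use the stochastic control representation \eqref{Eqn-J} of Lemma~\ref{Lemma-J}. Since the right-hand side of \eqref{Eqn-J} is an \emph{infimum} over admissible controls $\psi\in\mathcal F_{T-\beta,x_0}$, it suffices to exhibit, for each $y_0\in S$ and each small $\beta>0$, one admissible control and to bound the resulting expected cost from above by a quantity that depends only on $M$, $\lambda_{\downarrow}$, $\lambda_{\uparrow}$, $d$, $T$ and $\sup_{y_0\in S}\|y_0-x_0\|$, and whose $\beta\downarrow0$ limit is still finite. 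Setting $\delta_S=\exp(-\sup_{y_0\in S}J(T,x_0))$ then proves the statement, and keeping track of every constant along the way produces Algorithm~\ref{Algo-Lower-Bound}.

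For the control I would take a bridge-type feedback that already contains the true drift, namely $\psi(t,\eta)=\mu(\eta)+\dfrac{y_0-\eta}{T-t}$ (cut off at $t=T-\beta$ so that it stays in $\mathcal F_{T-\beta,x_0}$), so that $\eta$ is steered from $x_0$ toward $y_0$ and $\mu(\eta)-\psi(t,\eta)=-\dfrac{y_0-\eta}{T-t}$; hence $k(\eta,\psi)=\dfrac{1}{2(T-t)^2}(y_0-\eta)^T a^{-1}(\eta)(y_0-\eta)$. The running cost is then controlled by the a priori estimate $\E\!\left[(\eta(t)-y_0)^T a^{-1}(\eta(t))(\eta(t)-y_0)\right]\le (d+o(1))(T-t)+(\text{small})$, which I would obtain by applying It\^o's formula to $R(t)=(\eta(t)-y_0)^T a^{-1}(\eta(t))(\eta(t)-y_0)$ (equivalently, after whitening coordinates by $a(y_0)^{-1/2}$, to $\|\eta(t)-y_0\|_2^2$), using boundedness of $\mu$, $a$ and the derivatives of $a$ from Assumption~\ref{Assumption-Bound}, uniform ellipticity from Assumption~\ref{Assumption-Uniform-Elliptic}, and a Gronwall argument exploiting the dissipative term $-(\eta-y_0)/(T-t)$; this yields $\E\int_0^{T-\beta}k(\eta,\psi)\,dt\le \tfrac d2\log(T/\beta)+O(1)$. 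The terminal cost is fully explicit: since $J^\beta(0,z)=-\log f^\beta(z;y_0)=\tfrac d2\log(2\pi\beta)+\tfrac12\log\det a(y_0)+\tfrac1{2\beta}(z-y_0)^T a^{-1}(y_0)(z-y_0)$, the same second-moment estimate at $t=T-\beta$ bounds $\E[J^\beta(0,\eta(T-\beta))]$ by $\tfrac d2\log(2\pi\beta)+\tfrac12\log\det a(y_0)+\tfrac d2+O(\beta)$.

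Adding the two contributions, the divergent terms $\tfrac d2\log(T/\beta)$ and $\tfrac d2\log(2\pi\beta)$ carry opposite signs with the same coefficient, cancel, and leave a finite explicit value for the $\beta\downarrow0$ limit; combining this with \eqref{Eqn-p-beta} gives the desired uniform upper bound on $J(T,x_0)$ for $y_0\in S$. I expect the main obstacle to be exactly this cancellation: one must make the bridge second-moment estimate sharp enough — coefficient $d$, not a lossy multiple of it coming from crude $\lambda_{\downarrow},\lambda_{\uparrow}$ bounds — so that the $\log(1/\beta)$ terms genuinely cancel; this is where the geometry of the frozen matrix $a(y_0)$ has to be respected (via the whitening change of variables and by keeping $a^{-1}(\eta)$ rather than replacing it by its scalar bound), and it is accompanied by the routine but tedious check that the truncated bridge control is admissible, i.e. $\E\int_0^{T-\beta}\|\psi(t,\eta(t))\|_2^2\,dt<\infty$. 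Everything else — collecting the $O(1)$ terms (which involve $\sup_{y_0\in S}\|y_0-x_0\|_2^2/T$, $\log\lambda_{\uparrow}$, $M$, $d$ and $\lambda_{\downarrow}^{-1}$), passing to the limit, and reading off $\delta_S$ — is bookkeeping, and it is this bookkeeping that is formalized in Algorithm~\ref{Algo-Lower-Bound}.
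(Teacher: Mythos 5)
Your proposal is correct in substance and shares the paper's architecture exactly: both rest on the stochastic-control representation of Lemma \ref{Lemma-J}, both exhibit a single bridge-type admissible control, and both hinge on the cancellation of $\tfrac{d}{2}\log(T/\beta)$ from the running cost against $\tfrac{d}{2}\log(2\pi\beta)$ from the terminal cost $\E[J^{\beta}(0,\eta(T-\beta))]$, with the crude bound $\lambda_{\downarrow}^{-1}\E\|\eta(T-\beta)-y_0\|_2^2=O(\beta)$ sufficing for the $\tfrac{1}{2\beta}$-weighted terminal quadratic form since that term only needs to stay $O(1)$. Where you genuinely diverge is in the choice of control and the resulting estimation technique. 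The paper takes $\psi(t,x)=\tfrac{y_0-x_0}{T}-\tfrac{x-\phi(t)}{T-t}$ with $\phi$ the straight line from $x_0$ to $y_0$ (equivalently $\psi=\tfrac{y_0-x}{T-t}$, with no $\mu$), so that $\eta(t)-\phi(t)$ is the explicit Wiener integral \eqref{Eqn-Eta-Phi} with exactly computable second moments; the price is that $\mu$ stays inside the cost $k$, which the paper then controls by a third-order Taylor expansion of $k$ around $(\phi(t),\dot\phi(t))$, the sharp $\tfrac{d}{2}\log(1/\beta)$ coefficient emerging from the trace identity $\sum_{i,j}a^{-1}_{ij}(\phi(t))a_{ij}(\phi(t))=d$ in the term $k_{2,3}$, as in \eqref{Eqn-second_order1}. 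You instead fold $\mu$ into a feedback control $\psi(t,\eta)=\mu(\eta)+\tfrac{y_0-\eta}{T-t}$, which makes the running cost the pure bridge energy $\tfrac{1}{2(T-t)^2}(y_0-\eta)^{T}a^{-1}(\eta)(y_0-\eta)$ and shifts the work to an It\^o--Gronwall estimate on $R(t)=(\eta(t)-y_0)^{T}a^{-1}(\eta(t))(\eta(t)-y_0)$; there the It\^o correction $\tfrac12\sum_{k,l}a_{kl}\,\partial_{kl}R$ contributes the exact trace term $d$ and the dissipative part contributes $-\tfrac{2}{T-t}R$, so the comparison ODE $m'=-\tfrac{2}{T-t}m+d+(\text{integrable error})$ yields $\int_0^{T-\beta}\tfrac{m(t)}{2(T-t)^2}\,dt=\tfrac{d}{2}\log(T/\beta)+O(1)$ with the correct leading coefficient, exactly as you anticipate. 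You have correctly identified this as the one place where replacing $a^{-1}(\eta)$ by the scalar bound $\lambda_{\downarrow}^{-1}$ would be fatal. Your route is conceptually cleaner (the cost is purely geometric and no Taylor expansion of $k$ is needed), while the paper's route avoids Gronwall entirely by keeping $\eta$ a process with exactly known Gaussian-type moments and pays with the lengthy term-by-term expansion $k_0,\dots,k_{3,3}$; the only caveat is that your constants would differ from the $\Psi_i$ of \eqref{Eqn-Psi-1}--\eqref{Eqn-Psi-4}, so you would obtain a valid but different explicit $\delta_S$ rather than literally the one produced by Algorithm \ref{Algo-Lower-Bound}.
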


\begin{proof}
Finding a lower bound of density $p(y_{0},T;x_{0},0)$ is equivalent to
finding an upper bound for $J(T,x_{0})$. Towards this end, it suffices to
find an upper bound for $J^{\beta }(T,x_{0})$ that is uniform in $\beta $.
We shall define $\phi (\cdot )$ as a linear function, such that $\phi
(0)=x_{0},\phi (T)=y_{0}$. Write
\begin{equation*}
\psi (t,x)=\frac{y_{0}-x_{0}}{T}-\frac{x-\phi (t)}{T-t},\quad 0\leq t\leq
T-\beta .
\end{equation*}%
It is not hard to see that $\psi \in \mathcal{F}_{T-\beta ,x_{0}}$.
Therefore,
\begin{equation}  \label{Eqn-J-beta}
J^{\beta }(T,x_{0})\leq \mathbb{E}\left( \int_{0}^{T-\beta }k(\eta (t),\psi
(t))dt+J^{\beta }(0,\eta (T-\beta ))\right) ,
\end{equation}%
according to lemma \ref{Lemma-J}. Notice that
\begin{equation}  \label{Eqn-Eta-Phi}
(\eta (t)-\phi (t))_{i}=(T-t)\sum_{l=1}^{d^{\prime }}\int_{0}^{t}\frac{1}{T-s%
}\sigma _{il}(\eta (s))dW_{l}(s),\quad \text{for}\quad i=1,\dots ,d.
\end{equation}
It follows that
\begin{equation*}
\mathbb{E}\left( (\eta (t)-\phi (t))_{i}(\eta (t)-\phi (t))_{j}\right)
=(T-t)^{2}\mathbb{E}\left( \int_{0}^{t}\frac{1}{(T-s)^{2}}a_{ij}(\eta
(s))ds\right)
\end{equation*}%
and
\begin{equation}  \label{Eqn-eta-phi_square}
\mathbb{E}(\Vert \eta (t)-\phi (t)\Vert _{2}^{2})=(T-t)^{2}\mathbb{E}\left(
\int_{0}^{t}\frac{1}{(T-s)^{2}}\sum_{i=1}^{d}a_{ii}(\eta (s))ds\right) \leq
d\lambda _{\uparrow }(T-t).
\end{equation}

We now apply a Taylor expansion of $k(\eta (t),\psi (t))$ around $(\phi (t),%
\dot{\phi}(t))$, where $\dot{\phi}(\cdot )$ denotes the derivative of $\phi
(\cdot )$. For notational simplicity, we define
\begin{align*}
\Delta_{1}(t)& =\eta (t)-\phi (t), \\
\Delta_{2}(t)& =\psi (t)-\dot{\phi}(t)=-\frac{1}{T-t}\Delta_{1}(t).
\end{align*}%
We also define
\begin{equation*}
D_{x_{i}}k(\lambda )=\frac{\partial }{\partial x_{i}}k(\phi (t)+\lambda
\Delta_{1}(t),\dot{\phi}(t)+\lambda \Delta_{2}(t))
\end{equation*}%
and similarly for $D_{\psi _{i}}k$, $D_{x_{i},x_{j}}k$, $D_{x_{i},\psi
_{j}}k $ and $D_{\psi _{i},\psi _{j}}k$. The Taylor expansion with
remainders of third order is given as following 
\begin{align*}
k(\eta (t),\psi (t)) = k_{0}(t) + k_{1}(t)+k_{2,1}(t)+k_{2,2}(t)+k_{2,3}(t)
+k_{3,1}(t)+k_{3,2}(t)+k_{3,3}(t),
\end{align*}
where
\begin{align*}
k_{0}(t) &:= k(\phi(t),\dot{\phi}(t)), \\
k_{1}(t) &:= \sum_{i=1}^{d}\left( D_{x_{i}}k(0)\Delta _{1,i}(t)+D_{\psi
_{i}}k(0)\Delta _{2,i}(t)\right), \\
k_{2,1}(t) &:= \frac{1}{2}\sum_{i,j=1}^{d} D_{x_{i},x_{j}}k(0)\Delta
_{1,i}(t)\Delta _{1,j}(t), \\
k_{2,2}(t) &:= \sum_{i,j=1}^{d} D_{x_{i},\psi _{j}}k(0)\Delta
_{1,i}(t)\Delta _{2,j}(t), \\
k_{2,3}(t) &:= \frac{1}{2}\sum_{i,j=1}^{d} D_{\psi _{i},\psi _{j}}k(0)\Delta
_{2,i}(t)\Delta _{2,j}(t), \\
k_{3,1}(t) &:= \sum_{i,j=1}^{d}\int_{0}^{1}\int_{0}^{1}\left(
D_{x_{i},x_{j}}k(\lambda \mu )-D_{x_{i},x_{j}}k(0)\right) \Delta
_{1,i}(t)\Delta _{1,j}(t)\lambda d\mu d\lambda, \\
k_{3,2}(t) &:= \sum_{i,j=1}^{d}2\int_{0}^{1}\int_{0}^{1}\left( D_{x_{i},\psi
_{j}}k(\lambda \mu )-D_{x_{i},\psi _{j}}k(0)\right) \Delta _{1,i}(t)\Delta
_{2,j}(t)\lambda d\mu d\lambda, \\
k_{3,3}(t) &:= \sum_{i,j=1}^{d}\int_{0}^{1}\int_{0}^{1}\left( D_{\psi
_{i},\psi_{j}}k(\lambda \mu )-D_{\psi _{i},\psi _{j}}k(0)\right) \Delta
_{2,i}(t)\Delta _{2,j}(t)\lambda d\mu d\lambda.
\end{align*}
Now we integrate all the above terms from $0$ to $T-\beta $ with respect to
variable $t$, then take expectations, and analyze the upper bounds of the
result term by term.

$\bullet $\textbf{\ Zeroth Order Term:} Notice that $k$ is in quadratic
form, with matrix $(a_{ij}^{-1}(x))$, so
\begin{equation*}
\mathbb{E}\left(\int_{0}^{T-\beta } k_{0}(t)dt\right)= \mathbb{E}\left(
\int_{0}^{T-\beta }k(\phi (t),\dot{\phi}(t))dt\right) \leq \lambda
_{\downarrow }^{-1}T\left( M+\frac{|y_{0}-x_{0}|}{T}\right) ^{2}.
\end{equation*}%

$\bullet $\textbf{\ First Order Terms:} We treat first order term $k_{1}(t)$
first. Noting that $\Delta _{2,i}(t)$ is a martingale due to %
\eqref{Eqn-Eta-Phi}, the first order terms
\begin{equation*}
\mathbb{E}\left(\int_{0}^{T-\beta } k_{1}(t)dt\right)= \mathbb{E}\left(
\int_{0}^{T-\beta }\left( D_{x_{i}}k(0)\Delta _{1,i}(t)+D_{\psi
_{i}}k(0)\Delta _{2,i}(t)\right) dt\right) =0.
\end{equation*}%

$\bullet $\textbf{\ Second Order Terms:} We then treat the second order
terms. As $D_{\psi _{i},\psi _{j}}k(0)=a_{ij}^{-1}(\phi (t))$,
\begin{align*}
\mathbb{E}\left(\int_{0}^{T-\beta } k_{2,3}(t)dt\right) =& \mathbb{E}\left(
\int_{0}^{T-\beta }\frac{1}{2}\sum_{i,j=1}^{d}D_{\psi _{i},\psi
_{j}}k(0)\Delta _{2,i}(t)\Delta _{2,j}(t)dt\right) \\
=& \frac{1}{2}\int_{0}^{T-\beta }\mathbb{E}\left( \int_{0}^{t}\frac{1}{%
(T-s)^{2}}\sum_{i,j=1}^{d}a_{ij}^{-1}(\phi (t))a_{ij}(\eta (s))ds\right) dt.
\end{align*}%
Writing
\begin{equation*}
a_{ij}(\eta (s))=(a_{ij}(\eta (s))-a_{ij}(\phi (s)))+(a_{ij}(\phi
(s))-a_{ij}(\phi (t)))+a_{ij}(\phi (t)),
\end{equation*}%
and noticing that $(a_{ij}(t))$ is symmetric, we see that
\begin{equation}  \label{Eqn-second_order1}
\frac{1}{2}\int_{0}^{T-\beta }\mathbb{E}\left( \int_{0}^{t}\frac{1}{(T-s)^{2}%
}\sum_{i,j=1}^{d}a_{ij}^{-1}(\phi (t))a_{ij}(\phi (t))ds\right) dt=\frac{d}{2%
}(\log (T)-\log (\beta )).
\end{equation}%
Assumption \ref{Assumption-Bound} implies the Lipschitz continuity of $%
a(\cdot )$, which gives,
\begin{align*}
& \frac{1}{2}\int_{0}^{T-\beta }\mathbb{E}\left( \int_{0}^{t}\frac{1}{%
(T-s)^{2}}\sum_{i,j=1}^{d}g_{ij}(\phi (t))(a_{ij}(\eta (s))-a_{ij}(\phi
(s)))ds\right) dt \\
\leq & \frac{1}{2}\int_{0}^{T-\beta }\mathbb{E}\left( \int_{0}^{t}\frac{M}{%
(T-s)^{2}}\sum_{i,j=1}^{d}|g_{ij}(\phi (t))|\times \Vert \eta (s)-\phi
(s)\Vert _{2}ds\right) dt \\
=& \frac{1}{2}\int_{0}^{T-\beta }\int_{0}^{t}\frac{M}{(T-s)^{2}}%
\sum_{i,j=1}^{d}|a_{ij}^{-1}(\phi (t))|\times \mathbb{E}\left( \Vert \eta
(s)-\phi (s)\Vert _{2}\right) dsdt.
\end{align*}%
Due to \eqref{Eqn-eta-phi_square} and Jensen's inequality,
\begin{equation*}
\mathbb{E}\left[ \Vert \eta (s)-\phi (s)\Vert _{2}\right] \leq \left(
d\lambda _{\uparrow }(T-t)\right) ^{1/2}.
\end{equation*}%
Observe that $\sum_{i,j=1}^{d}|a_{ij}^{-1}(\phi (t))|\leq d\lambda
_{\downarrow }^{-1}$, so we have
\begin{equation}  \label{Eqn-second_order2}
\begin{split}
& \frac{1}{2}\int_{0}^{T-\beta }\mathbb{E}\left( \int_{0}^{t}\frac{1}{%
(T-s)^{2}}\sum_{i,j=1}^{d}a_{ij}^{-1}(\phi (t))(a_{ij}(\eta (s))-a_{ij}(\phi
(s)))ds\right) dt \\
\leq & M(d\lambda _{\uparrow }T)^{1/2}d\lambda _{\downarrow }^{-1}.
\end{split}%
\end{equation}%
By the Lipschitz continuity of $(a_{ij}(\cdot ))$, it follows that
\begin{equation*}
|a_{ij}(\phi (s))-a_{ij}(\phi (t))|\leq MT^{-1}\Vert x_{0}-y_{0}\Vert
_{2}|s-t|.
\end{equation*}%
Therefore,
\begin{equation}  \label{Eqn-second_order3}
\begin{split}
& \frac{1}{2}\int_{0}^{T-\beta }\mathbb{E}\left( \int_{0}^{t}\frac{1}{%
(T-s)^{2}}\sum_{i,j=1}^{d}a_{ij}^{-1}(\phi (t))(a_{ij}(\phi (s))-a_{ij}(\phi
(t)))ds\right) dt \\
\leq & \frac{1}{2}d\lambda _{\downarrow }^{-1}MT^{-1}\Vert x_{0}-y_{0}\Vert
_{2}\int_{0}^{T-\beta }\left( \int_{0}^{t}\frac{t-s}{(T-s)^{2}}ds\right) dt
\\
\leq & \frac{1}{2}d\lambda _{\downarrow }^{-1}M\Vert x_{0}-y_{0}\Vert _{2}
\end{split}%
\end{equation}%
Combining (\ref{Eqn-second_order1}), (\ref{Eqn-second_order2}) and (\ref%
{Eqn-second_order3}) yields
\begin{equation*}
\begin{split}
\mathbb{E}\left(\int_{0}^{T-\beta } k_{2,3}(t)dt\right) &= \mathbb{E}\left(
\int_{0}^{T-\beta }\frac{1}{2}\sum_{i,j=1}^{d}D_{\psi _{i},\psi
_{j}}k(0)\Delta _{2,i}(t)\Delta _{2,j}(t)dt\right) \\
\leq & \frac{d}{2}(\log (T)-\log (\beta ))+M(d\lambda _{\uparrow
}T)^{1/2}d\lambda _{\downarrow }^{-1}+\frac{d}{2}\lambda _{\downarrow
}^{-1}M\Vert x_{0}-y_{0}\Vert _{2}.
\end{split}%
\end{equation*}%
By the chain rule and Assumption \ref{Assumption-Bound}, we obtain
\begin{align*}
|D_{x_{i}}a_{ij}^{-1}(x)|& \leq d^{2}\lambda _{\downarrow }^{-2}M, \\
|D_{x_{i}\psi _{j}}k(0)|& \leq \Psi _{1}(\Vert x_{0}-y_{0}\Vert _{2}/T), \\
|D_{x_{i}x_{j}}k(0)|& \leq \Psi _{2}(\Vert x_{0}-y_{0}\Vert _{2}/T).
\end{align*}%
where $\Psi _{i}(\cdot ):\mathbb{R}\rightarrow \mathbb{R};i=1,2$ are defined
as
\begin{align}
\Psi _{1}(x)&:=d^{2}\lambda _{\downarrow }^{-2}M(M+x)+d\lambda _{\downarrow
}^{-1}M,\label{Eqn-Psi-1}\\
\Psi _{2}(x)&:=(M+x)^{2}d^{2}(\frac{1}{2}\lambda _{\downarrow
}^{-2}Md+\lambda _{\downarrow }^{-3}M^{2}d^{2})+2\lambda _{\downarrow
}^{-1}M^{2}d^{2}+\lambda _{\downarrow }^{-1}Mdx\nonumber\\
&\qquad+2\lambda _{\downarrow
}^{-2}M^{3}d^{3}+2\lambda _{\downarrow }^{-2}M^{2}d^{2}x.\label{Eqn-Psi-2}
\end{align}%
Taking \eqref{Eqn-eta-phi_square} into consideration, we obtain,
\begin{align*}
\mathbb{E}\left(\int_{0}^{T-\beta } k_{2,2}(t)dt\right)&= \mathbb{E}\left(
\int_{0}^{T-\beta }\frac{1}{2}\sum_{i,j=1}^{d}D_{x_{i},\psi _{j}}k(0)\Delta
_{1,i}(t)\Delta _{2,j}(t)dt\right) \\& \leq \frac{1}{2}d\lambda _{\uparrow
}T\Psi _{1}(\Vert x_{0}-y_{0}\Vert _{2}/T), \\
\mathbb{E}\left(\int_{0}^{T-\beta } k_{2,1}(t)dt\right)&= \mathbb{E}\left(
\int_{0}^{T-\beta }\frac{1}{2}\sum_{i,j=1}^{d}D_{x_{i},x_{j}}k(0)\Delta
_{1,i}(t)\Delta _{1,j}(t)dt\right) \\& \leq \frac{1}{4}d\lambda _{\uparrow
}T^{2}\Psi _{2}(\Vert x_{0}-y_{0}\Vert _{2}/T),
\end{align*}

$\bullet $\textbf{\ Third Order Terms:} We proceed to analyze the third
order remainder terms. Let us consider $k_{3,3}(t)$ first. Notice that
\begin{equation*}
|D_{\psi _{i},\psi _{j}}k(\lambda \mu )-D_{\psi _{i},\psi _{j}}k(0)|\leq
\lambda _{\downarrow }^{-2}d^{2}M\lambda \mu \Vert \Delta_{1}(t)\Vert _{2},
\end{equation*}%
thus,
\begin{align*}
&\quad\left\vert\mathbb{E}\left(\int_{0}^{T-\beta } k_{3,3}(t)dt\right)\right\vert\\
&= \left\vert \mathbb{E}\left(
\sum_{i,j=1}^{d}\int_{0}^{1}\int_{0}^{1}\left( D_{\psi _{i},\psi
_{j}}k(\lambda \mu )-D_{\psi _{i},\psi _{j}}k(0)\right)
\Delta_{2,i}(t)\Delta _{2,j}(t)\lambda d\mu d\lambda \right) \right\vert \\
&\leq\frac{1}{6}Md^{3}\lambda _{\downarrow }^{-2}\mathbb{E}(\Vert \Delta
^{1}(t)\Vert _{2}\Vert \Delta_{2}(t)\Vert _{2}^{2}).
\end{align*}%
Then, by Burkholder-Davis-Gundy inequality,
\begin{align*}
& \mathbb{E}(\Vert \Delta_{1}(t)\Vert _{2}\Vert \Delta_{2}(t)\Vert
_{2}^{2})\leq (T-t)C_{\text{BDG}}(3)d^{\frac{1}{2}}\sum_{i=1}^{d}\mathbb{E}%
\left( \left( \int_{0}^{t}\frac{1}{(T-s)^{2}}a_{ii}(\eta (s))ds\right) ^{%
\frac{3}{2}}\right) \\
\leq & C_{\text{BDG}}(3)d^{\frac{3}{2}}\lambda _{\uparrow }^{\frac{3}{2}%
}(T-t)^{-\frac{1}{2}},
\end{align*}%
where $C_{BDG}(3)$ is the explicit constant in the Burkholder-Davis-Gundy
inequality. We can pick $C_{\text{BDG}}(p)=\left( \frac{p(p-1)}{2}(\frac{p}{%
p-1})^{p}\right) ^{p/2}$ (See Proposition 4.4.3 of \cite{revuz2013continuous}%
.). To summarize, we obtain
\begin{align*}
&\quad\left\vert\mathbb{E}\left(\int_{0}^{T-\beta } k_{3,3}(t)dt\right)\right\vert\\
&= \left\vert \int_{0}^{T-\beta }\mathbb{E}\left(
\sum_{i,j=1}^{d}\int_{0}^{1}\int_{0}^{1}\left( D_{\psi _{i},\psi
_{j}}k(\lambda \mu )-D_{\psi _{i},\psi _{j}}k(0)\right) \Delta
_{2,i}(t)\Delta _{2,j}(t)\lambda d\mu d\lambda \right) dt\right\vert \\
& \leq \frac{1}{3}C_{\text{BDG}}(3)Md^{\frac{9}{2}}\lambda _{\downarrow
}^{-2}\lambda _{\uparrow }^{\frac{3}{2}}T^{\frac{1}{2}}.
\end{align*}%
Next, we consider the other two remainders $k_{3,2}(t),k_{3,1}(t)$. Observe
that
\begin{equation*}
|D_{x_{i}\psi _{j}}k(\lambda \mu )|\leq \Psi _{1}(\Vert x_{0}-y_{0}\Vert
_{2}/T+\lambda \mu \Vert \Delta _{2}(t)\Vert _{2}),
\end{equation*}%
and
\begin{equation*}
|D_{x_{i}x_{j}}k(\lambda \mu )|\leq \Psi _{2}(\Vert x_{0}-y_{0}\Vert
_{2}/T+\lambda \mu \Vert \Delta _{2}(t)\Vert _{2}).
\end{equation*}%
Thus, by a similar argument, we can also derive
\begin{align*}
\left\vert\mathbb{E}\left(\int_{0}^{T-\beta } k_{3,2}(t)dt\right)\right\vert
\leq \Psi _{3}(\Vert x_{0}-y_{0}\Vert _{2}/T)
\end{align*}%
and
\begin{align*}
\left\vert\mathbb{E}\left(\int_{0}^{T-\beta } k_{3,1}(t)dt\right)\right\vert
\leq \Psi _{4}(\Vert x_{0}-y_{0}\Vert _{2}/T),
\end{align*}%
where $\Psi _{3}(\cdot )$ and $\Psi _{4}(\cdot )$ are defined as
\begin{equation}  \label{Eqn-Psi-3}
\Psi _{3}(x):=d^{4}T\lambda _{\uparrow }\lambda _{\downarrow
}^{-2}Mx+d^{4}T\lambda _{\uparrow }\lambda _{\downarrow
}^{-2}M^{2}+d^{3}T\lambda _{\uparrow }\lambda _{\downarrow }^{-1}M+\frac{1}{3%
}C_{\text{BDG}}(3)d^{\frac{7}{2}}T^{\frac{1}{2}}\lambda _{\uparrow }^{\frac{3%
}{2}}\lambda _{\downarrow }^{-2}M,
\end{equation}%
\begin{equation}  \label{Eqn-Psi-4}
\begin{split}
& \quad \Psi _{4}(x):=\left( \frac{1}{4}d^{5}T^{2}\lambda _{\uparrow
}\lambda _{\downarrow }^{-2}M+\frac{1}{2}d^{6}T^{2}\lambda _{\uparrow
}\lambda _{\downarrow }^{-3}M^{2}\right) x^{2} \\
& + \frac{1}{9}C_{\text{BDG}}(3)(d\lambda _{\uparrow }T)^{\frac{3}{2}%
}(\lambda _{\downarrow }^{-2}Md^{3}+2\lambda _{\downarrow
}^{-3}M^{3}d^{4})x+d^{5}T^{2}\lambda _{\uparrow }\lambda _{\downarrow
}^{-2}M^{2}x+d^{6}T^{2}\lambda _{\uparrow }\lambda _{\downarrow
}^{-3}M^{3}x \\
& +d^{3}T^{2}\lambda _{\uparrow }\lambda _{\downarrow
}^{-1}M x+\frac{1}{24}C_{\text{BDG}}(4)d^{5}T\lambda _{\uparrow }\lambda
_{\downarrow }^{-2}M+\frac{1}{12}C_{\text{BDG}}(4)d^{6}T\lambda _{\uparrow
}\lambda _{\downarrow }^{-3}M^{2} \\
& +\frac{2}{9}C_{\text{BDG}}(3)d^{\frac{9}{2}%
}T^{\frac{3}{2}}\lambda _{\uparrow }^{\frac{3}{2}}\lambda _{\downarrow
}^{-2}M^{2}+\frac{2}{9}C_{\text{BDG}}(3)d^{\frac{11}{2}}T^{\frac{3}{2}}\lambda
_{\uparrow }^{\frac{3}{2}}\lambda _{\downarrow }^{-3}M^{3}+\frac{1}{9}C_{%
	\text{BDG}}(3)d^{\frac{5}{2}}T^{\frac{3}{2}}\lambda _{\uparrow }^{\frac{3}{2}%
}\lambda _{\downarrow }^{-1}M \\
&+\frac{1}{2}d^{6}T^{2}\lambda _{\uparrow
}\lambda _{\downarrow }^{-3}M^{4}+\frac{1}{2}d^{4}T^{2}\lambda _{\uparrow
}\lambda _{\downarrow }^{-1}M^{2} +\frac{3}{4}d^{5}T^{2}\lambda _{\uparrow }\lambda _{\downarrow }^{-2}M^{3}.
\end{split}%
\end{equation}%

Finally, let us consider $\mathbb{E}(J^{\beta }(0,\eta (T-\beta )))$. Since
\begin{align*}
& \mathbb{E}((\eta (T-\beta )-y_{0})_{i}(\eta (T-\beta )-y_{0})_{j}) \\
=& \mathbb{E}((\eta (T-\beta )-\phi (T-\beta ))_{i}(\eta (T-\beta )-\phi
(T-\beta ))_{j} \\
& \quad +(\phi (T-\beta )-\phi (T))_{i}(\phi (T-\beta )-\phi (T))_{j}) \\
\leq & d\lambda _{\uparrow }\beta +\beta ^{2}|x_{0}-y_{0}|^{2}/T^{2},
\end{align*}%
It follows that
\begin{equation*}
\begin{split}
& \mathbb{E}(J^{\beta }(0,\eta (T-\beta ))) \\
=& \frac{d}{2}\log (2\pi \beta )+\frac{1}{2}\log \det (a(y_{0}))+\frac{1}{%
2\beta }\sum_{i,j=1}^{d}g_{ij}(y_{0})\mathbb{E}((\eta (T-\beta
)-y_{0})_{i}(\eta (T-\beta )-y_{0})_{j}) \\
\leq & \frac{d}{2}\log (2\pi \beta )+\frac{d}{2}\log \lambda _{\uparrow }+%
\frac{1}{2}d^{2}\lambda _{\uparrow }\lambda _{\downarrow }^{-1}+\frac{d}{2}%
\frac{\Vert x_{0}-y_{0}\Vert _{2}^{2}}{T}.
\end{split}%
\end{equation*}%
To conclude, let us summarize all the intermediate results, and substitute
them into \eqref{Eqn-J-beta}, we have
\begin{equation*}
J(T,x_{0})=\lim_{\beta \rightarrow 0}J^{\beta }(T-\beta ,x_{0})\leq
J_{\uparrow }(\Vert x_{0}-y_{0}\Vert _{2};T)
\end{equation*}%
where $J_{\uparrow }(\cdot ;T)$ is defined as
\begin{align}
& J_{\uparrow }(x;T):=\lambda _{\downarrow }^{-1}T\left( M+\frac{\Vert
y_{0}-x_{0}\Vert _{2}}{T}\right) ^{2}+\frac{d}{2}(\log (2\pi T))+M(d\lambda
_{\uparrow }T)^{1/2}d\lambda _{\downarrow }^{-1}  \label{Eqn-J-Uparrow} \\
& +\frac{d}{2}\lambda _{\downarrow }^{-1}Mx+\frac{1}{2}d\lambda _{\uparrow
}T\Psi _{1}(x/T)+\frac{1}{4}d\lambda _{\uparrow }T^{2}\Psi _{2}(x/T)  \notag
\\
& +\frac{1}{3}C_{\text{BDG}}(3)Md^{\frac{9}{2}}\lambda _{\downarrow
}^{-2}\lambda _{\uparrow }^{\frac{3}{2}}T^{\frac{1}{2}}+\Psi _{3}(x/T)+\Psi
_{4}(x/T)+\frac{d}{2}\log \lambda _{\uparrow }  \notag \\
& +\frac{1}{2}d^{2}\lambda _{\uparrow }\lambda _{\downarrow }^{-1}+\frac{d}{2%
}\frac{x^{2}}{T},\quad \forall \beta \in (0,T).  \notag
\end{align}%
Therefore, if we pick $D_{S}=\sup_{x\in S}\Vert x-x_{0}\Vert _{2}$, it
follows that
\begin{equation*}
p(x,T;x_{0},y)\geq \exp \left( -J_{\uparrow }(D_{S};T)\right) ,\quad \forall
x\in S,
\end{equation*}%
which ends the proof.
\end{proof}

\begin{algorithm}
	 	\caption{Computation of the lower bound $\delta_S$}
	 	\begin{algorithmic}[1]
	 		\State $D_S \gets \sup_{x\in S}\|x-x_0\|_2$.
	 		\State Evaluate $\Psi_i(D_S);i = 1,2,3,4$ by \eqref{Eqn-Psi-1},\eqref{Eqn-Psi-2},\eqref{Eqn-Psi-3} and \eqref{Eqn-Psi-4}.
	 		\State Evaluate $J_{\uparrow}(D_S;T)$ by \eqref{Eqn-J-Uparrow}.
	 		\State $\delta_{S}\gets\exp\left(-J_{\uparrow}(D_S;T)\right)$.
	 		\State \textbf{Output} $\delta_{S}$.
	 	\end{algorithmic}
 		\label{Algo-Lower-Bound}
	 \end{algorithm}

\end{document}